\pgfplotsset{compat=1.11}
\newcommand{\punto}{{node[circle, draw, fill=blue!50,
                        inner sep=0pt, minimum width=1.5pt]
{}}}
\newtheorem{proposition}{Proposition}[section]
\newtheorem{theorem}[proposition]{Theorem}
\newtheorem{corollary}[proposition]{Corollary}
\newtheorem{lemma}[proposition]{Lemma}
\theoremstyle{definition}
\theoremstyle{remark}
\newtheorem{remark}[proposition]{Remark}
\numberwithin{equation}{section}
\newcommand{\eps}{\varepsilon}
\newcommand{\R}{{\mathbb{R}}}
\newcommand{\Ecal}{{\mathcal{E}}}
\newcommand{\Gcal}{{\mathcal{G}}}
\newcommand{\cG}{{\mathcal G}}
\newcommand{\cF}{{\mathcal F}}
\newcommand{\cE}{{\mathcal E}}
\newcommand{\cA}{{\mathcal A}}
\newcommand{\cB}{{\mathcal B}}
\newcommand{\weak}{\rightharpoonup}
\newcommand{\pa}{\partial}
\newcommand{\En}{{E}}
\newcommand{\urr}{{u_\R}}
\newcommand{\uii}{{u_{[0,1]}}}
\title{Local minimizers in absence of ground states for the critical NLS energy on metric graphs}
\author{Dario Pierotti, Nicola Soave and Gianmaria Verzini}
\begin{document}
\maketitle

\begin{abstract}
We consider the mass-critical nonlinear Schr\"odinger equation on non-compact metric graphs. A quite complete description of the structure of the ground states, which correspond to global minimizers of the energy functional under a mass constraint, is provided by Adami, Serra and Tilli in \cite{AST2}, where it is proved that existence and properties of ground states depend in a crucial way on both the value of the mass, and the topological properties of the underlying graph. In this paper we address cases when ground states do not exist and show that, under suitable assumptions, constrained local minimizers of the energy do exist. This result paves the way to the existence of stable solutions in the time-dependent equation in cases where the ground state energy level is not achieved.
\end{abstract}
\noindent
{\footnotesize \textbf{AMS-Subject Classification}}.
{\footnotesize 35R02; 35Q55; 81Q35; 49J40.}\\
{\footnotesize \textbf{Keywords}}.
{\footnotesize Normalized solutions; non-compact metric graphs; nonlinear Schr\"odinger equation; $L^2$-critical exponent.}

\section{Introduction}

Throughout this paper, we deal with non-compact connected metric graphs $\Gcal$, having a finite number of vertices and edges, where any edge $e$ is identified either with a closed bounded interval
$[0,|e|]$, or with (a copy of) the closed half-line $\R^+=[0,+\infty)$. On such a
$\mathcal{G}$, we consider the nonlinear Schr\"odinger (NLS) energy functional
\begin{equation}
\label{Encrit}
\En(u,\mathcal{G})=\frac{1}{2}\int_{\mathcal{G}}| u'|^2- \frac{1}{6}\int_{\mathcal{G}}| u|^{6},
\end{equation}
under the mass constraint
\begin{equation}
\label{masscon}
u\in H^1_{\mu}(\Gcal):=\left\{u\in H^1(\mathcal{G})\,,\,\int_{\mathcal{G}}| u|^2=\mu\right\},
\end{equation}
the Sobolev space  $H^1(\mathcal{G})$ consisting of all the continuous functions $u$ on $\Gcal$, such that
$\left.u\right|_{e} \in H^1(e)$ for every edge $e$.
As explained in \cite{AST2}, see also \cite{AST1,MR3494248, MR3758538}, the above energy is critical because, under mass-invariant dilations, the two terms in $\En$ scale in the same way.

Critical points of $\En(\cdot,\Gcal)$ constrained to $H^1_{\mu}(\Gcal)$ satisfy the NLS equation
\begin{equation}\label{eq:NLSE}
u'' + |u|^4 u = \lambda u
\end{equation}
on every edge, for a Lagrange multiplier $\lambda$ (which is the same on every edges, but may be different for different solutions); moreover, at each vertex the Kirchhoff condition is satisfied, which requires
 the sum of all the ingoing derivatives to vanish (see \cite[Prop. 3.3]{AST1}).
Through the usual \emph{ansatz} $\Phi(x,t) = e^{i\lambda t} u(x)$, such critical points correspond to solitary wave solutions to the Schr\"odinger equation
\[
i \partial_t \Phi (x,t) + \partial_{xx}\Phi(x,t) + |\Phi(x,t)|^4\Phi(x,t) = 0,
\qquad
x\in\Gcal, \ t>0,
\]
which appears in the Gross-Pitaevskii theory for Bose-Einstein condensation on graph-like structures (for more details on the physical interpretation, see \cite{AST2} and references therein). 

The study of Schr\"odinger equations on metric graphs has attracted considerable attention in the last decades. Linear problems have been extensively studied, see \cite{BK} and references therein. More recently, nonlinear problems have been addressed as well. We do not attempt to provide a complete overview of the many available results in the literature, for which we refer the interested reader to \cite{AST11, No} and the references therein. We limit to mention that results related to ours, concerning critical problems, have been obtained in \cite{AST2}, whose main results will be discussed in details in what follows; in \cite{Dnodea, DTcalcvar}, devoted to problems on periodic graphs and with localized nonlinearities, respectively; and, very recently, in \cite{NP}, regarding existence of standing waves (not necessarily ground states) of the NLS equation on the so-called tadpole graph. 

From the dynamical point of view, the more interesting critical points are the local minimizers, since they are natural candidates to correspond to orbitally stable solitary waves \cite{GrillakisShatahStrauss}.

The search of global minimizers, i.e. the ground state minimization problem
\begin{equation}
\label{eq:gslevel}
\Ecal_\Gcal(\mu):=\inf_{u\in H^1_{\mu}(\Gcal)}\En(u,\mathcal{G}),
\end{equation}
has been extensively investigated by Adami, Serra and Tilli in \cite{AST2}. As they describe, the
range of masses $\mu$ for which \eqref{eq:gslevel} is achieved strongly depends on the
topological properties of $\Gcal$. In case $\Gcal = \R$, seen as a pair of half-lines glued
together at the origin, $\Ecal_\R(\mu)$ is achieved if and only if $\mu$ is equal to the critical
mass $\mu_\R = \pi\sqrt 3/2$; actually, this is very well known, and similar results hold true
also in $\R^N$, $N\ge2$ (see for instance \cite{Cazenave2003}). Analogously, in case
$\Gcal = \R^+$, $\Ecal_{\R^+}(\mu)$ is achieved if and only if $\mu=\mu_{\R^+} = \mu_\R /2$. For
general $\Gcal$, several different situations may happen. On a general ground, every $\Gcal$ admits a critical mass $\mu_\Gcal$, where
\[
\mu_{\R^+}\le\mu_\Gcal\le\mu_{\R}
\]
(see \cite[Prop. 2.3]{AST2}), and a necessary condition for $\Ecal_\Gcal(\mu)$ to be achieved is that
\[
\mu_\Gcal\le\mu\le\mu_{\R}
\]
(see \cite[Coro. 2.5]{AST2}). Such condition is far from being sufficient, and
\cite{AST2} provides a classification of different kinds of graph, for which the set of masses
$\mu$ allowing for a ground state can be either an interval, reduce to a point, or even be empty. We
postpone a more detailed discussion of the results in \cite{AST2} below.

The main aim of this paper is to show that, under fairly general assumptions, there may exist
local minimizers of $E$ in $H^1_\mu(\Gcal)$, for values of $\mu$ strictly smaller than
$\mu_\Gcal$, i.e. in cases when a global minimizer can not exist. To state our main result, we
assume w.l.o.g. that any vertex of $\Gcal$ has degree different from $2$ (this is possible whenever $\Gcal$ is not isometric to $\R$, which can be decomposed in two half-lines only allowing a vertex of degree $2$). With some abuse of
notation, we call a ``open half-line $\ell\subset\Gcal$'' any unbounded edge of the underlying combinatorial graph, and we
denote with $\Gcal\setminus\ell$ the graph obtained from $\Gcal$ by removing the edge $\ell$ and the corresponding vertex at infinity.

Our main result is the following.
\begin{theorem}\label{thm:main_intro}
Let $\Gcal$ be a non-compact connected metric graph, having a finite number of vertices and edges. Let us assume that
\begin{enumerate}
 \item $\Gcal$ has at least two half-lines and at least one bounded edge;
 \item for every open half-line $\ell\subset\Gcal$, $\mu_{\Gcal\setminus\ell} = \mu_\Gcal$.
\end{enumerate}
Then there exists $\bar \mu \in (0, \mu_{\Gcal})$ such that for every $\mu \in (\bar \mu, \mu_{\Gcal})$ the functional $\En(\cdot, \cG)$ has a critical point on $H^1_{\mu}(\cG)$, which is a local minimizer.
\end{theorem}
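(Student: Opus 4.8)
\emph{Strategy.} Since $\mu<\mu_\Gcal$, the Gagliardo--Nirenberg inequality underlying the definition of the critical mass gives $\En(u,\Gcal)\ge\tfrac12\|u'\|_{L^2(\Gcal)}^2\bigl(1-\mu^2/\mu_\Gcal^2\bigr)\ge 0$, and $\Ecal_\Gcal(\mu)=0$ is not attained because minimizing sequences let their mass spread out or run off to infinity along the half-lines. My plan is therefore to produce a \emph{positive-energy} critical point trapped near the compact core, by minimizing $\En(\cdot,\Gcal)$ over a subset $\Kcal\subset H^1_\mu(\Gcal)$ that confines mass to a neighborhood of the core, and then proving that the minimizer is \emph{interior} to $\Kcal$. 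Interiority is the decisive point: if a minimizer $u^\ast$ of $\En(\cdot,\Gcal)$ over $\Kcal$ does not touch the constraint defining $\Kcal$, then it is a local minimizer of $\En(\cdot,\Gcal)$ on the whole of $H^1_\mu(\Gcal)$, hence a constrained critical point solving \eqref{eq:NLSE} for some $\lambda$, with no spurious multiplier coming from the auxiliary constraint. Thus the theorem reduces to two tasks: building $\Kcal$ so that the minimum is attained (compactness), and checking that the trapped minimizer sits strictly below every configuration that would leave $\Kcal$.

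\emph{The trapped competitor and the threshold $\bar\mu$.} The mechanism that creates a local minimizer, absent on $\R$ (where $\En$ has no local minima below the critical mass, the energy being flattened by the mass-invariant dilation $u\mapsto\sqrt t\,u(t\,\cdot)$), is the presence of a bounded edge: it pins a length scale. I would take as a model a soliton-shaped profile sitting on the compact core at the scale selected by the bounded edge, and estimate its energy. Rescaling or displacing this profile so as to push mass towards the branching vertices and out along the half-lines forces it to split at the vertices, where the Kirchhoff condition makes concentration strictly less efficient; this raises the energy and produces a barrier separating the trapped profile from the low-energy escaping or spreading configurations that realize $\Ecal_\Gcal(\mu)=0$. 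The two half-lines enter in keeping $\Gcal\setminus\ell$ non-compact, so that the variational thresholds appearing in the comparison are the genuine ones. Quantifying this barrier — making it strictly positive, with the trapped energy small as $\mu\uparrow\mu_\Gcal$ — is exactly what confines $\mu$ to a left neighborhood $(\bar\mu,\mu_\Gcal)$ of the critical mass.

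\emph{Compactness: the main obstacle.} The hard part is to exclude loss of compactness of a minimizing sequence $(u_n)\subset\Kcal$. Spreading (so that $\|u_n\|_{L^6}\to0$) is ruled out directly, since it would push the energy up to the pure-gradient value $\tfrac12\|u'\|_{L^2}^2$, strictly above the trapped level. The genuine threat is \emph{dichotomy}: a fixed fraction of mass and gradient detaches and escapes to infinity along some half-line $\ell$ as a travelling bubble governed, far from the vertices, by the translation-invariant problem on $\R$, while the rest stays on $\Gcal$. To rule this out I would establish a strict subadditivity inequality asserting that splitting the mass between the core and an escaping $\R$-bubble is strictly more expensive than keeping it trapped. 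This is precisely where hypothesis~(2), $\mu_{\Gcal\setminus\ell}=\mu_\Gcal$, is essential: it guarantees that shedding a half-line does \emph{not} expose a more efficient subgraph onto which the escaping mass could lower the energy, so that escape cannot undercut the trapped level; were $\mu_{\Gcal\setminus\ell}<\mu_\Gcal$ for some $\ell$, the escaping competitor could win and the local minimizer would be destroyed. Once dichotomy and vanishing are excluded, a minimizing sequence converges strongly; the limit $u^\ast$ realizes the constrained minimum and, by the energy comparison of the previous step, is interior to $\Kcal$, hence the desired local minimizer of $\En(\cdot,\Gcal)$ on $H^1_\mu(\Gcal)$.
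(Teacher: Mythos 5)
Your skeleton is in fact the paper's: it also minimizes over $\cA=\{u\in H^1_\mu(\cG):\ \int_K u^2\ge\eta\}$, also uses a truncated soliton supported on a bounded edge to show $\inf_{\cA}\En\to 0$ as $\mu\to\mu_\cG^-$ (Lemmas \ref{lem:inf-a} and \ref{lem:inf-a'}), and also runs the principle ``infimum strictly below the level on the boundary collar implies an interior, hence genuine, constrained critical point'' (Proposition \ref{prop: comp}). The problem is that your two load-bearing estimates are asserted rather than proved, and in both cases you attach the wrong mechanism. The decisive gap is the barrier/interiority step: you justify it by ``the Kirchhoff condition makes concentration strictly less efficient at the vertices'', a heuristic that uses neither hypothesis of the theorem, whereas this is exactly, and only, where hypothesis (2) enters. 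The paper's argument (Lemmas \ref{lem:eq-co} and \ref{lem:l-b}) is: if $u$ lies in the collar $\cB$, the half-lines carry total mass at least $\delta\eta$, so some half-line $\ell$ carries mass at least $\delta\eta/m$; by hypothesis (2), $\mu_{\cG\setminus\ell}=\mu_\cG$, so the Gagliardo--Nirenberg bound \eqref{coerc1} applies with strictly positive coefficient both on $\cG\setminus\ell$ (whose mass is $\le\mu-\delta\eta/m<\mu_{\cG\setminus\ell}$) and on $\ell$ (whose mass is $\le\mu-\eta<\mu_{\R^+}$), giving equicoercivity and then a uniform positive lower bound $C_4$ on $\cB$. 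If instead $\mu_{\cG\setminus\ell}<\mu_\cG$ for some $\ell$, a collar function can load $\cG\setminus\ell$ beyond its critical mass and the energy there is unbounded below, so the ``strictly positive barrier'' you need simply has no proof without re-inserting (2) at this precise point.

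Conversely, hypothesis (2) plays no role where you invoke it: an escaping bubble interacts asymptotically only with the half-line it travels along, i.e.\ with $\R$, so which subgraph remains after ``shedding'' $\ell$ is irrelevant to its energy. Worse, for the $L^2$-critical power the strict subadditivity you appeal to does not exist at the level of ground-state energies: $\Ecal_\cG(\cdot)$ and $\Ecal_\R(\cdot)$ vanish identically below the critical mass, so ``splitting is strictly more expensive'' must be proved for the trapped levels $\mu\mapsto\inf_{\cA}\En$; weak monotonicity is trivial (attach a flat bump far out on a half-line), but strictness requires an additional argument --- for instance the scaling $u\mapsto\sqrt{t}\,u$ combined with the lower bound $\|u\|_{L^6(K)}^6\ge\eta^3/|K|^2$ forced by the core constraint via H\"older, and the smallness of the trapped level near $\mu_\cG$ --- none of which appears in your sketch. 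The paper sidesteps concentration--compactness entirely: Ekeland's principle produces a Palais--Smale minimizing sequence, which is then replaced on each half-line by its decreasing rearrangement, dilated so as to restore continuity at the vertices (Lemma \ref{lem: spec PS}); the tails then lie in the cone of monotone functions, compactly embedded in $L^6(\R^+)$, and strong $L^6$ convergence plus negativity of the Lagrange multiplier upgrades to strong $H^1$ convergence. Two smaller omissions: vanishing is excluded directly by the constraint ($\eta\le\int_K u^2\le|K|^{2/3}(\int_K u^6)^{1/3}$), not by your energy comparison, and a complete proof must split the two cases $\mu_\cG=\mu_{\R^+}$ (terminal point present) and $\mu_\cG=\mu_\R$, where the competitor and the admissible range of $\eta$ (in the latter case $\eta>\mu_{\R^+}$) are different.
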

Exploiting the results in \cite{AST2}, we can provide an explicit characterization of different
classes of graphs fulfilling our assumptions, see below. As an example, Theorem
\ref{thm:main_intro} applies to any $\Gcal$ having at least two half-lines and one terminal
point (a tip), the simplest prototype being the one illustrated in Fig. \ref{fig:tip}. Notably, this kind of graphs admits no
global minimizers, regardless of the choice of $\mu$.
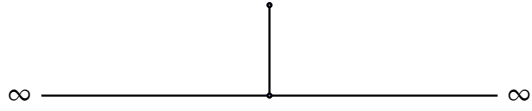
\begin{figure}
\begin{center}
\begin{tikzpicture}[thick]%
    \draw
 (0,0) node[left]{$\infty$} -- 
 (3,0) \punto --
 (6,0) node[right]{$\infty$}
 (3,0) -- (3,1.2) \punto
;
\end{tikzpicture}
\caption{graph with a tip and two half-lines.\label{fig:tip}}
\end{center}
\end{figure}

We remark that normalized local minimizers of NLS-type energies, on standard domains,
have been recently found in different contexts: we refer to \cite{ntvAnPDE,ntvDCDS,MR3689156,MR3918087} for NLS equations and systems on bounded domains of
$\R^{N}$, to \cite{BellazziniJeanjean,MR3638314} for problems on $\R^N$ with potentials, to \cite{BellazziniGeorgievVisciglia} for a semi-relativistic case, and to \cite{Nic1, Nic2} for equations with combined nonlinearities.

To better illustrate our result, let us provide more details about the results contained in
\cite{AST2}, about the global minimization problem \eqref{eq:gslevel}. The authors there detect
four mutually exclusive cases \cite[Thms. 3.1--3.4]{AST2}.
\begin{enumerate}
 \item \emph{$\Gcal$ has at least a terminal point.} A terminal point (a tip) is a vertex, not at infinity, of degree 1. In this case $\mu_\Gcal = \mu_{\R^+}$, and \eqref{eq:gslevel} is never achieved unless $\Gcal = \R^+$ and $\mu = \mu_{\R^+}$;
 \item \emph{$\Gcal$ admits a cycle covering.} Here ``cycle'' means either a bounded loop, or an unbounded path joining two distinct points at infinity. Equivalently, $\Gcal$ has at least two half-lines and no terminal point, and whenever $\Gcal\setminus e$ has two connected components, both are unbounded (here $e$ denotes any bounded edge). Then $\mu_\Gcal = \mu_{\R}$, and \eqref{eq:gslevel} is never achieved unless $\Gcal = \R$ or $\Gcal$ is a ``bubble tower'' (see \cite{AST2}), and $ \mu= \mu_{\R}$;
 \item \emph{$\Gcal$ has exactly one half-line and no terminal point.} Then $\mu_\Gcal =
 \mu_{\R^+}$  and \eqref{eq:gslevel} is achieved if and only if $\mu\in (\mu_{\R^+},\mu_{\R}]$;
 \item \emph{$\Gcal$ does not belong to any of the previous three cases, i.e. it has no tips, no cycle-covering and at least 2 half-lines.} Then, in case
 $\mu_\Gcal < \mu_{\R}$  we have that \eqref{eq:gslevel}
 is achieved if and only if $\mu\in[\mu_{\Gcal},\mu_{\R}]$; if $\mu_\Gcal = \mu_{\R}$, then nothing is known.
\end{enumerate}
Using such classification, we easily see that the following types of graph $\Gcal$ fulfill the assumptions of Theorem \ref{thm:main_intro}.
\begin{itemize}
 \item \emph{$\Gcal$ has at least a terminal point and at least two half-lines (Fig. \ref{fig:tip}).} Indeed, both
$\Gcal$ and $\Gcal\setminus\ell$, for any open half-line $\ell\subset\Gcal$, fall into case 1. above, and $\mu_{\Gcal\setminus\ell} = \mu_\Gcal = \mu_{\R^+}$.
 \item \emph{$\Gcal$ has at least a bounded edge and both $\Gcal$ and any $\Gcal\setminus\ell$ admit a cycle covering (Fig. \ref{fig:cycle1}, left). }%
\begin{figure}
\begin{center}
\begin{tikzpicture}[thick]%
    \draw
 (2,0) node[left]{$\infty$} -- 
 (3,0) \punto
 (2,2) node[left]{$\infty$} -- 
 (3,2) \punto
 (3,0) to[out=45,in=-140] (3,2) -- (4,1.7) \punto -- (4.2,.5) \punto -- cycle
 (3,0) -- (4.6,0) \punto -- (4.2,.5)
 (4.6,0) -- (5,0)
 (5,0) \punto -- (6.2,0) \punto -- (6,2) \punto -- cycle
 (6.2,0) -- 
 (7,0) node[right]{$\infty$}
 (6,2) -- 
 (7,2) node[right]{$\infty$}
;
\end{tikzpicture}
\qquad
\begin{tikzpicture}[thick]%
    \draw
 (1.3,0) node[left]{$\infty$} -- 
 (3,0) \punto --
 (3,.4) \punto arc (-90:0:.8) \punto arc (0:270:.8)
 (3,0) -- 
 (5.3,0) node[right]{$\infty$}
 (3.8,1.2) -- node[above, pos=.55]{$\ell^*$}
 (5.3,1.2) node[right]{$\infty$}
;
\end{tikzpicture}
\caption{the graph on the left fulfills the assumptions of Theorem \ref{thm:main_intro}, while that on the right does not.\label{fig:cycle1}}
\end{center}
\end{figure}
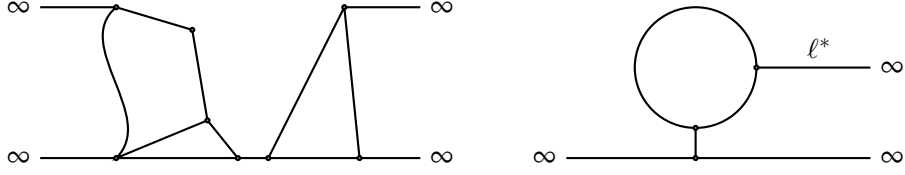%
Then, both
$\Gcal$ and any $\Gcal\setminus\ell$ fall into case 2. above, with $\mu_{\Gcal\setminus\ell} = \mu_\Gcal = \mu_{\R}$. Sufficient conditions for this to hold are that $\cG$ admits a cycle covering, has at least three half-lines and a bounded edge, and no cutting edge. Such conditions are not necessary, as the figure shows. On the other hand, graphs with a cycle covering and at least three half-lines may not satisfy the assumptions of Theorem \ref{thm:main_intro}: for instance,
removing $\ell^*$ from the graph in Fig. \ref{fig:cycle1}, right, we obtain the so-called ``signpost'' graph, which falls into case 4. above and whose critical mass is known to be strictly smaller than $\mu_\R$ (see \cite{AST2}). Similarly, our result does not apply to star-graphs, which do not have bounded edges (see Remark \ref{rem:stargraph} for a deeper discussion of this case).
\end{itemize}
We observe also that Theorem \ref{thm:main_intro} does not apply to graphs satisfying
3. above, since they have exactly one half-line (a prototypical example of this class of graphs is the tadpole graph, studied in \cite{NP}). Finally, to clarify whether Theorem \ref{thm:main_intro} applies to graphs of type 4., we provide
the following result.
\begin{proposition}\label{prop:removing}
Let $\Gcal$ be a non-compact metric graph, not isometric to $\R^+$, having a finite number of vertices and edges.
If $\Ecal_\Gcal(\mu_{\Gcal})$ is achieved then there exists an open half-line
$\ell\subset\Gcal$ such that $\mu_{\Gcal\setminus\ell} < \mu_\Gcal$.
\end{proposition}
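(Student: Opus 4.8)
The plan is to extract, from the assumed minimizer $u$ realizing $\Ecal_\Gcal(\mu_\Gcal)$, a half-line whose removal strictly raises the best Gagliardo--Nirenberg constant. Write $\mu:=\mu_\Gcal$ and recall that the critical mass of any non-compact graph $\Gamma$ equals $\sqrt{3/K_\Gamma}$, where $K_\Gamma$ is the best constant in $\int_\Gamma|v|^6\le K(\int_\Gamma|v'|^2)(\int_\Gamma|v|^2)^2$; since $K\mapsto\sqrt{3/K}$ is decreasing, it is enough to exhibit a single $w\in H^1(\Gcal\setminus\ell)$ whose quotient $\int|w|^6/[(\int|w'|^2)(\int|w|^2)^2]$ exceeds $K_\Gcal=3/\mu^2$. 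I would first normalize: one may take $u>0$ (replace $u$ by $|u|$ and use the maximum principle), so that $u''+u^5=\lambda u$ on each edge with Kirchhoff conditions; testing with $u$ and using $\En(u,\Gcal)=0$ (equivalently $\int_\Gcal|u'|^2=\tfrac13\int_\Gcal|u|^6$) gives $\lambda=2\int_\Gcal|u'|^2/\mu>0$.

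Because $\lambda>0$ and $u\in H^1$, on every half-line $u$ decays together with $u'$, so the conserved quantity $\tfrac12(u')^2-\tfrac\lambda2 u^2+\tfrac16 u^6$ vanishes identically there, and $u$ restricted to a half-line is a piece of the homoclinic soliton: either monotone decreasing from the vertex value, or with a single interior bump of height $(3\lambda)^{1/4}$. A bump carries mass strictly larger than $\mu_{\R^+}$ (it contains a full half-soliton plus a rising part), so two bump-type half-lines would force $\int_\Gcal|u|^2>2\mu_{\R^+}=\mu_\R\ge\mu$, which is impossible; hence at most one half-line bears a bump. The relevant situation has at least two half-lines---graphs with a single half-line satisfy $\mu_\Gcal=\mu_{\R^+}$ and, not being $\R^+$, do not achieve $\Ecal_\Gcal(\mu_\Gcal)$ (cases 1 and 3 in the classification above), so the hypothesis is void there---and therefore at least one half-line $\ell$, attached at a vertex $v$, carries a \emph{monotone} tail.

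On that tail, integrating $(u')^2=\lambda u^2-\tfrac13 u^6$ yields explicit values for $m_\ell:=\int_\ell u^2$, $a_\ell:=\int_\ell(u')^2$ and $b_\ell:=\int_\ell|u|^6$; a short computation gives $3a_\ell-b_\ell=\tfrac32 P$, where $P:=u(v)\,|u'(v)|\ge 0$. Setting $A:=\int_\Gcal|u'|^2$, $B:=\int_\Gcal|u|^6=3A$ and $w:=u|_{\Gcal\setminus\ell}$, the assertion that $w$ beats $K_\Gcal$ reduces, after clearing denominators, to the positivity of
\[
D:=\mu^2\big(B-b_\ell\big)-3(A-a_\ell)(\mu-m_\ell)^2=\tfrac32 P\,\mu^2+3(A-a_\ell)\,m_\ell\,(2\mu-m_\ell).
\]
Here $P\ge 0$, while $0<m_\ell<\mu$ and $0<a_\ell<A$ because a second half-line carries positive mass and kinetic energy; hence $D>0$, so $K_{\Gcal\setminus\ell}>K_\Gcal$ and $\mu_{\Gcal\setminus\ell}<\mu$, as required.

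I expect the delicate part to be the structural analysis rather than the closing algebra: establishing the soliton-tail dichotomy on the half-lines and the mass-counting that guarantees a monotone tail to excise. The borderline configuration is the $\R$-like one, $P=0$ (the peak sitting exactly at $v$), for which the excised function $w$ has \emph{exactly zero} energy; there a naive energy comparison is inconclusive, and one genuinely relies on the second, strictly positive term $3(A-a_\ell)\,m_\ell\,(2\mu-m_\ell)$ in $D$---that is, on the fact that discarding the tail lowers the mass (hence the denominator of the quotient) strictly more than it lowers the numerator.
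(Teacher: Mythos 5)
Your proof is correct, but it takes a genuinely different route from the paper's. The paper's argument never touches the Euler--Lagrange equation: after using the classification of \cite{AST2} to ensure at least two half-lines (and disposing of $\Gcal$ isometric to $\R$ by hand), it selects a half-line $\ell_1$ carrying mass $\eta<\mu_{\R^+}$ --- such a half-line exists by pure mass counting, since the two half-lines cannot exhaust the total mass $\mu_\Gcal\le\mu_\R$ once the rest of the graph has positive measure --- and then invokes the Gagliardo--Nirenberg coercivity \eqref{coerc1} on $\R^+$ to get $\En(\bar u|_{\ell_1},\ell_1)>0$, hence $\En(\bar u|_{\Gcal\setminus\ell_1},\Gcal\setminus\ell_1)<0$ at mass $\mu_\Gcal-\eta$; the dichotomy \eqref{eq:gsmu} then forces $\mu_{\Gcal\setminus\ell_1}\le\mu_\Gcal-\eta<\mu_\Gcal$. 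You instead exploit the full ODE structure of the minimizer (soliton pieces on the half-lines, at most one bump by mass counting), excise an \emph{arbitrary} monotone tail, and close with a Pohozaev-type identity plus a comparison of Gagliardo--Nirenberg quotients. The two arguments meet in the following sense: once the ODE structure is known, the paper's key positivity $\En(\bar u|_{\ell},\ell)>0$ is exactly your condition $P>0$ (indeed $\En(\bar u|_\ell,\ell)=\tfrac14 P$), and the paper's low-mass choice of $\ell$ guarantees this automatically; your argument is instead insensitive to the choice of tail, and in particular handles the borderline $P=0$ case (tail of mass exactly $\mu_{\R^+}$), where a plain energy comparison is inconclusive, thanks to the strictly positive term $3(A-a_\ell)m_\ell(2\mu-m_\ell)$ produced by the mass reduction. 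What the paper's softer argument buys is brevity (no regularity, decay, or phase-plane analysis) and the quantitative bound $\mu_{\Gcal\setminus\ell}\le\mu_\Gcal-\eta$; what yours buys is robustness in the choice of $\ell$ and structural information on the minimizer. One technical remark: your reduction rests on the identity $\mu_\Gamma=\sqrt{3/K_\Gamma}$, with $K_\Gamma$ the \emph{sharp} Gagliardo--Nirenberg constant; this is indeed the definition of the critical mass in \cite{AST2}, but if one prefers to use only the facts quoted in this paper, note that multiplying $w$ by the constant $c=\bigl(\mu/(\mu-m_\ell)\bigr)^{1/2}>1$ restores mass $\mu$, and your inequality $D>0$ is precisely $\En(cw,\Gcal\setminus\ell)<0$, so that \eqref{eq:gsmu} yields $\mu_{\Gcal\setminus\ell}<\mu_\Gcal$ directly.
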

Incidentally, the above proposition is of independent interest, as it helps in supplementing
\cite[Thm. 3.4]{AST2}.
\begin{corollary}\label{coro:supplementAST}
If $\Gcal$ falls in case 4. above (i.e. it has no tips, no cycle-covering and at least 2 half-lines) then $\mu_\Gcal>\mu_{\R^+}$.
\end{corollary}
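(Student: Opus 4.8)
The plan is to argue by contradiction, combining the a priori lower bound $\mu_{\R^+}\le\mu_\Gcal$ with Proposition~\ref{prop:removing}. Since every non-compact connected graph satisfies $\mu_{\R^+}\le\mu_\Gcal$, establishing the strict inequality reduces to ruling out the equality $\mu_\Gcal=\mu_{\R^+}$. Accordingly, I would assume $\mu_\Gcal=\mu_{\R^+}$ and derive a contradiction.

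First I would observe that this assumption places us in the ``achieved'' subcase of case~4. Indeed, since $\mu_{\R^+}=\mu_\R/2<\mu_\R$, the assumption gives $\mu_\Gcal<\mu_\R$; by the description of case~4 recalled above, $\Ecal_\Gcal(\mu)$ is then achieved for every $\mu\in[\mu_\Gcal,\mu_\R]$, and in particular the critical-mass level $\Ecal_\Gcal(\mu_\Gcal)$ is achieved.

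Next I would invoke Proposition~\ref{prop:removing}. As $\Gcal$ has at least two half-lines (being of type~4), it is not isometric to $\R^+$, so the proposition applies and produces an open half-line $\ell\subset\Gcal$ with $\mu_{\Gcal\setminus\ell}<\mu_\Gcal=\mu_{\R^+}$. To close the argument I would then verify that $\Gcal\setminus\ell$ is still an admissible graph for the general bound: removing the single pendant half-line $\ell$ together with its vertex at infinity keeps the graph connected, and since $\Gcal$ had at least two half-lines, $\Gcal\setminus\ell$ still contains at least one half-line and is therefore non-compact. Hence the a priori estimate gives $\mu_{\Gcal\setminus\ell}\ge\mu_{\R^+}$, contradicting $\mu_{\Gcal\setminus\ell}<\mu_{\R^+}$.

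The argument is short because the substantive content is packaged inside Proposition~\ref{prop:removing}; the only genuine check — and precisely the point where the hypothesis of \emph{at least two} half-lines is used — is this last step, where one must ensure that $\Gcal\setminus\ell$ remains a non-compact connected metric graph so that the bound $\mu_{\R^+}\le\mu_{\Gcal\setminus\ell}$ may legitimately be applied. If $\ell$ were the only half-line, $\Gcal\setminus\ell$ could become compact and the bound would no longer be available, which is consistent with graphs of type~3 (exactly one half-line) being excluded from the statement.
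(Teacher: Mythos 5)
Your proposal is correct and follows essentially the same route as the paper's proof: contradiction from $\mu_\Gcal=\mu_{\R^+}$, achievement of $\Ecal_\Gcal(\mu_\Gcal)$ via the case-4 classification (\cite[Thm. 3.4]{AST2}), Proposition~\ref{prop:removing} to produce $\ell$ with $\mu_{\Gcal\setminus\ell}<\mu_{\R^+}$, and the a priori bound $\mu_{\R^+}\le\mu_{\Gcal\setminus\ell}$ (\cite[Prop. 2.3]{AST2}) for the contradiction. Your explicit check that $\Gcal\setminus\ell$ remains non-compact and connected is a detail the paper leaves implicit, and it correctly pinpoints where the hypothesis of at least two half-lines enters.
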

Based on Proposition \ref{prop:removing} we see that, in case $\Gcal$ belongs to case 4.
above and $\mu_\Gcal<\mu_\R$, then Theorem \ref{thm:main_intro} does not apply. On the contrary,
it may apply to some $\Gcal$ in case 4. not achieving $\mu_\Gcal = \mu_\R$, even though we are
not aware of any example of such a graph.

To conclude this introduction, we want to describe the strategy we followed in order to guess
that a result like Theorem \ref{thm:main_intro} may hold. Indeed, this strategy is quite
elementary, and we think that it may be applied successfully also to other cases. It is based on three steps.

\textbf{Step 1: phase plane analysis of a model graph.} Normalized solutions, i.e. critical
points of $E(\cdot, \Gcal)$ in $H^1_\mu(\Gcal)$, with assigned $\mu$, can not be find by
elementary methods, even when $\Gcal$ is a fixed graph with simple structure. This is not the
case if, instead of assigning $\mu$, we use as a parameter the Lagrange multiplier $\lambda$ in
\eqref{eq:NLSE}. For concreteness, we consider the prototype
graph $\Gcal$ made by two half-lines and a segment, depicted as in Fig. \ref{fig:tip}. For any
fixed $\lambda$ we look for solutions of the NLSE \eqref{eq:NLSE} on every edge of
$\Gcal$, complemented with Kirchhoff conditions at the vertex. This can be easily done via
elementary phase plane analysis in different ways, obtaining families of solutions
\[
\lambda \mapsto u_\lambda \in H^1_{\mu(\lambda)}(\Gcal),
\qquad
\text{where }\mu(\lambda) := \int_\Gcal u_\lambda^2.
\]
In this way $u_\lambda$ is a critical point of $E(\cdot, \Gcal)$ in $H^1_{\mu(\lambda)}(\Gcal)$,
and both $u_\lambda$ and $\mu(\lambda)$ are either explicit or easy to be numerically estimated.

\textbf{Step 2: detection of candidate local minimizers.} Once an explicit family
$\lambda \mapsto u_\lambda$ is constructed as in Step 1, we would like to detect if it consists
in local minimizers. A very powerful tool to this aim is the celebrated stability theory
developed by Grillakis, Shatah and Strauss \cite{GrillakisShatahStrauss,MR1081647}. Roughly, in
our context such theory implies that $u_\lambda$, $0<a<\lambda<b$, is a local minimizer of $E(\cdot, \Gcal)$ in $H^1_{\mu(\lambda)}(\Gcal)$ if
\begin{itemize}
 \item $u_\lambda$, as a critical point of the action functional $A_\lambda(u,\Gcal) =
  E(u, \Gcal) + \frac{\lambda}{2}\int_\Gcal u^2$, is non-degenerate, it has Morse index $1$, and
 \item the map $\lambda\mapsto\mu(\lambda)$ is increasing.
\end{itemize}

\textbf{Step 3: identification of the variational structure and extension to more general
graphs.} Contrarily to saddle points, local minimizers are structurally stable. In the model case
$u_\lambda$ is explicit. Once a neighborhood of $u_\lambda$ is identified, in which $u_\lambda$
is a global minimizer, we can try to spot similar neighborhoods on more general graphs. Of
course, this is the more delicate part of the strategy, and will involve the minimization of the energy under a double constraint, which will induce a convenient splitting of the mass on the compact and non-compact parts of the graph. Different types of doubly constrained variational problems for the NLS energy were considered also in \cite{AST10, NP}.

The paper is structured as follows. In Section \ref{sec:prel} we prove Proposition \ref{prop:removing} and Corollary \ref{coro:supplementAST}. In Section
\ref{soltip} we develop the phase plane analysis of the model graph illustrated in Fig. \ref{fig:tip} (Step 1 of the above strategy). Section \ref{sec: cpt} is devoted to a general compactness argument for locally minimizing sequences, which is then applied in Section \ref{sec:proofmain} to prove Theorem
\ref{thm:main_intro}. Finally, in Appendix \ref{app:A} we sketch the application of the Grillakis-Shatah-Strauss theory to the explicit solutions on the model graph (Step 2 of the above strategy).

\section{Proof of Proposition \ref{prop:removing}}\label{sec:prel}

With the same notations as in \cite{AST2}, we recall that on a general non-compact metric graph $\cG$ the Gagliardo-Nirenberg inequality
\[
\|u\|_{L^6(\cG)}^6 \le \frac{3}{\mu_{\cG}^2} \|u\|_{L^2(\cG)}^4 \|u'\|_{L^2(\cG)}^2 \qquad \forall u \in H^1(\cG)
\]
holds, where $\mu_{\cG}$ denotes the critical mass of $\cG$. It follows plainly that
\begin{equation}\label{coerc1}
\En(u, \cG) \ge \frac12 \left( 1- \left(\frac{\mu}{\mu_{\cG}}\right)^2\right)\|u'\|_{L^2(\cG)}^2 \qquad \forall u \in H^1_\mu(\cG).
\end{equation}
In particular, for $\mu\le \mu_{\mathcal{G}}$ the functional $\En(\cdot,\mathcal{G})$ is non-negative. Moreover, from \cite[Prop. 2.4]{AST2} we know that
\begin{equation}\label{eq:gsmu}
\begin{aligned}
\Ecal_\Gcal(\mu) &=0  &&\text{if }\mu \le \mu_\Gcal, \\
\Ecal_\Gcal(\mu) &<0\text{ (possibly $-\infty$)} && \text{if }\mu > \mu_\Gcal.
\end{aligned}
\end{equation}
\begin{proof}[Proof of Proposition \ref{prop:removing}]
Since $\Gcal$ is not isometric to $\R^+$, and $\Ecal_\Gcal(\mu_\Gcal)$ is achieved, we deduce by the classification provided in \cite{AST2} that $\Gcal$ has
at least two half-lines, say $\ell_1$ and $\ell_2$. Notice that, in case $\Gcal$ is isometric to $\R$, the proposition is trivial: indeed, in such case $\Gcal\setminus\ell\equiv\R^+$, and $\mu_{\R^+} = \mu_\R/2<\mu_\R$. As a consequence, we are left to treat the case
in which $\Gcal\setminus(\ell_1\cup\ell_2)$ has positive measure (possibly infinite, in case $\Gcal$ has at least three half-lines). Let $\bar u\in H^1_{\mu_{\Gcal}}(\Gcal)$, strictly positive on $\Gcal$,
be such that $E(\bar u,\Gcal)=\Ecal_\Gcal(\mu_{\Gcal})=0$. We have
\[
\int_{\ell_1} \bar u^2  + \int_{\ell_2} \bar u^2  < \int_{\Gcal} \bar u^2 = \mu_\Gcal \le \mu_\R.
\]
We deduce that at least one half-line, say $\ell_1$, satisfies
\[
\int_{\ell_1} \bar u^2=:\eta < \frac{\mu_\R}{2}=\mu_{\R^+},
\]
therefore, by \eqref{coerc1},
\[
E(\left.\bar u\right|_{\ell_1},\ell_1)>0.
\]
Then we obtain
\[
\Ecal_{\Gcal\setminus{\ell_1}}(\mu_\Gcal-\eta) \le E(\left.\bar u\right|_{\Gcal\setminus{\ell_1}},\Gcal\setminus{\ell_1}) =
E(\bar u,\Gcal) - E(\left.\bar u\right|_{\ell_1},\ell_1)<0.
\]
Recalling \eqref{eq:gsmu}, this forces $\mu_{\Gcal\setminus\ell_1} \le \mu_\Gcal-\eta$, and the proposition follows.
\end{proof}
\begin{proof}[Proof of Corollary \ref{coro:supplementAST}]
Assume by contradiction that $\Gcal$ has no tips, no cycle-covering and at least 2 half-lines, and $\mu_\Gcal=\mu_{\R^+}$. Then, by
\cite[Thm. 3.4]{AST2}, $\mu_\Gcal$ is achieved. But then Proposition \ref{prop:removing} implies that, for some $\ell$,
the non-compact graph $\Gcal\setminus\ell$ satisfies $\mu_{\Gcal\setminus\ell} < \mu_{\R^+}$, in contradiction with
\cite[Prop. 2.3]{AST2}.
\end{proof}

\section{Direct analysis of the model case.}
\label{soltip}

Let $\mathcal{G}$ be a metric graph consisting of a straight line, identified with $\R$, with one pendant attached at the origin (see Fig. \ref{fig:tip}). Without loss of generality, we identify the pendant with the interval $[0,1]$. As we mentioned, it was proved in \cite[Thm. 3.1]{AST2} that the ground-state energy level \eqref{eq:gslevel} is never
attained. In this section we show that there exist positive constrained critical levels of $\En(u,\mathcal{G})$ for suitable intervals of $\mu$, some of which correspond to candidate local minimizers.

We recall that $u\in H^1_{\mu}(\Gcal)$ is a constrained critical point of $\En$ if and only if it solves, on every edge of $\mathcal{G}$, the stationary NLS equation
\begin{equation}
\label{equcrit}
u''+|u|^4u=\frac{\Lambda^2}{3}u\,,
\end{equation}
for some value of $\Lambda$ not depending on the edge, and the Kirchhoff condition holds at any vertex of $\Gcal$ (see \cite[Prop. 3.3]{AST1}). For every $\Lambda>0$, the equation \eqref{equcrit}  is solved on $\mathbb{R}$ by the family of solitons
\begin{equation}
\label{defsol}
\phi_{\Lambda}(x)=\sqrt{\Lambda}\,\phi(\Lambda x)\,,\quad\quad\quad \phi(x)=\cosh^{-1/2}(2x/\sqrt 3)\,,
\end{equation}
\emph{all of mass} $\mu_{\mathbb{R}}$. We will define a solution of \eqref{equcrit} made of two symmetric pieces of translated solitons in $\mathbb{R}$ and of a suitable solution defined in $[0,1]$. As a matter of fact, we mimic an analogous construction performed in \cite[Sec. 6]{AST1} to provide a ground state in the subcritical case.

Let us define, for every $y>0$ fixed,
\begin{equation}
\urr(x) := u\big |_{\mathbb{R}}(x)=\left\{
                                                \begin{array}{ll}
                                                    \phi_{\Lambda}(x+y), & \hbox{if $x\ge 0$;} \\
                                                    \phi_{\Lambda}(x-y), & \hbox{if $x<0$.}
                                                  \end{array}
                                                \right.
\end{equation}
Note that $\urr$ is continuous on $\mathbb{R}$ since
\begin{equation}
\label{philam}
\phi_{\Lambda}(y)=\phi_{\Lambda}(-y)=\sqrt{\Lambda}\cosh^{-1/2}(2\Lambda y/\sqrt{3})\,.
\end{equation}
On the other hand, there is a jump of the derivatives at the origin as
$$
\phi'_{\Lambda}(\pm y) =
\mp\frac{\Lambda^{3/2}}{\sqrt{3}}\frac{\sinh(2\Lambda y/\sqrt{3})}{\cosh^{3/2}(2\Lambda y/\sqrt{3})}.
$$
On the interval $[0,1]$ we define $\uii:=u\big |_{[0,1]}$ by solving \eqref{equcrit}  with overdetermined data assigned by the
continuity at $x=0$ and Kirchhoff conditions:
\begin{equation}
\label{initcond}
\uii(0)=\phi_{\Lambda}(y), \qquad
\uii'(0)=2|\phi'_{\Lambda}(y)|,
\end{equation}
\begin{equation}
\label{kircon}
\uii'(1)=0.
\end{equation}
Note that, by elementary calculations, one can write the above term in the form
\begin{equation}
\label{initcond2b}
2|\phi'_{\Lambda}(y)|=\frac{2}{\sqrt{3}}\phi_{\Lambda}(y) \big (\Lambda^2-\phi_{\Lambda}(y)^4\big )^{1/2}.
\end{equation}
The solutions of equation \eqref{equcrit} are conveniently represented by the orbits of the associated Hamiltonian system in the $(u,u')$-plane, see Fig. \ref{fig:pp}; in fact, any solution $u$ on a connected interval satisfies
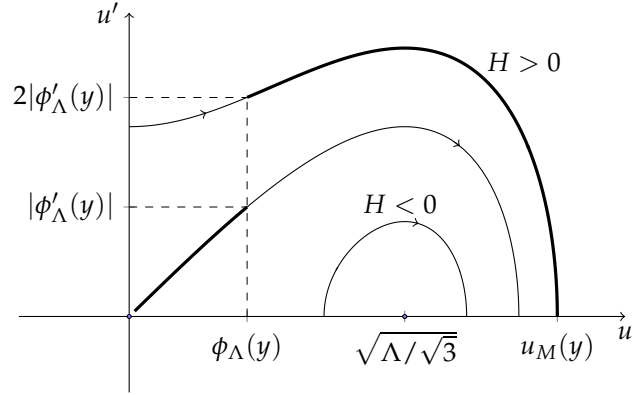
\begin{figure}[htbp]
\begin{center}
\begin{tikzpicture}
\begin{axis}[
   axis lines=center,
   width=.65\linewidth,
   height=.45\linewidth,
   axis line style={->},
   typeset ticklabels with strut,
   x label style={below},
   y label style={left},
   xmin=-.4, xmax=1.8,
   ymin=-.2, ymax=.8,
   xlabel={$u$},
   ylabel={$u'$},
   xtick={.428,1,1.5538},
   xticklabels={$\phi_\Lambda(y)$,$\sqrt{\Lambda/\sqrt{3}}$,$u_M(y)$},
   ytick={0.2884,0.5769},
   yticklabels={$|\phi'_\Lambda(y)|$,$2|\phi'_\Lambda(y)|$},
   ]
\draw (1.6,.72) node[below left, fill=white] {$H>0$};
\draw (1.6,.72) node[below left, fill=white] {$H>0$};
\draw (1.15,.345) node[below left, fill=white] {$H<0$};
\draw (0,0) \punto (1,0) \punto;
\addplot [domain=0.02:.428, very thick]{.5*sqrt(2*x^2-x^4))};
\addplot [domain=.428:1.2, samples=80, ->]{.5*sqrt(2*x^2-x^4))};
\addplot [domain=0:.46]({sqrt(1+sqrt(1-4*x^2))},{x});
\addplot [domain=0:.28, ->]{.5*sqrt(1+2*x^2-x^4))};
\addplot [domain=0.27:.428]{.5*sqrt(1+2*x^2-x^4))};
\addplot [domain=.428:1.45, samples=80, very thick]{.5*sqrt(1+2*x^2-x^4))};
\addplot [domain=0:.45, very thick]({sqrt(1+sqrt(2-4*x^2))},{x});
\addplot [domain=.75:1.05, samples=80, ->]{.5*sqrt(-.75+2*x^2-x^4))};
\addplot [domain=1.04:1.2, samples=80]{.5*sqrt(-.75+2*x^2-x^4))};
\addplot [domain=0:.12]({sqrt(1+sqrt(.25-4*x^2))},{x});
\addplot [domain=0:.13]({sqrt(1-sqrt(.25-4*x^2))},{x});
\addplot [domain=0:0.5769, very thin, dashed] ({0.428},{x});
\addplot [domain=0:.428, very thin, dashed] {0.2884};
\addplot [domain=0:.428, very thin, dashed] {0.5769};
\end{axis}
\end{tikzpicture}
\caption{phase plane analysis for the model graph: the thick part on the homoclinic trajectory corresponds to the solution on each half-line $\R_\pm$; the thick part on the trajectory with $H>0$ corresponds to the solution on the pendant $[0,1]$.
\label{fig:pp}}
\end{center}
\end{figure}
\begin{equation}
\label{hamsys}
\frac{1}{2}|u'|^2+\frac{1}{6}|u|^6-\frac{\Lambda^2}{6}|u|^2=H\,,
\end{equation}
for some $H\ge H_m = -\Lambda^3/9\sqrt 3$ (the minimum of the potential energy). The system has three equilibrium solutions $u=0$ (with $H=0$) and $u=\pm\sqrt{\Lambda/\sqrt{3}}$ (with $H=H_m<0$). Moreover, the level set $H=0$ also contains the homoclinic orbit corresponding to the soliton solution \eqref{defsol} and its translates; such orbit intersects the $u$ axis also at $u=\sqrt{\Lambda}$. By reflecting with respect to the $u'$ axis we have the homoclinic associated to minus the soliton solution.
Finally, any other  level curve with $H\neq 0$ (both positive and negative) is a closed orbit corresponding to\emph{ periodic solutions} of the system; if $H<0$, the orbit is internal to one homoclinic, while it circles both the homoclinics for $H>0$.

Now, to every point $(\phi_{\Lambda}(y),\phi'_{\Lambda}(y))$ on the (right) homoclinic we associate the `initial point'
$(\phi_{\Lambda}(y),2|\phi'_{\Lambda}(y)|)$ on the level curve $H=H(y)$, where, by \eqref{initcond2b} and \eqref{hamsys},
\begin{equation}
\label{Ey}
H(y):= \frac{2}{3}\phi_{\Lambda}(y)^2 \big (\Lambda^2-\phi_{\Lambda}(y)^4\big )-\frac{\Lambda^2}{6}\phi_{\Lambda}(y)^2
+\frac{1}{6}\phi_{\Lambda}(y)^6=\frac{1}{2}\phi_{\Lambda}(y)^2 \big (\Lambda^2-\phi_{\Lambda}(y)^4\big )>0\,.
\end{equation}
By looking at the orbit with $H=H(y)$, one sees that it intersects the $u$ axis at the points $\pm u_M$, where
$$u_M=u_M(y)>\sqrt{\Lambda}$$
is the positive solution of
\begin{equation}
\label{uMy}
u_M^6 -\Lambda^2 u_M^2=6\, H(y)\,.
\end{equation}
We claim that, by suitably choosing $\Lambda$, we can provide a solution $u$ of \eqref{hamsys} on $(0,1)$ (with $H=H(y)$), which satisfies
\eqref{initcond} and such that $u(1)=u_M$; thus, at the end of the pendant the solution $u$ will satisfy \eqref{kircon} as well.

In order to prove the above claim, we first note that, by \eqref{philam}, \eqref{Ey}, \eqref{uMy}, the quantities
\begin{equation}
\label{stars}
\phi^*=\phi_{\Lambda}(y)/\sqrt{\Lambda},\quad  H^*=H(y)/\Lambda^3,\quad u_M^*=u_M/\sqrt{\Lambda},\,
\end{equation}
only depend on the product $z=\Lambda y$. Moreover, we have $0<\phi^*<1$, $ u_M^*>1$, and
\begin{equation}
\label{estar}
6\,H^*={u_M^*}^6-{u_M^*}^2\,.
\end{equation}
Solving \eqref{hamsys} with respect to $u'$ and choosing $du/dx\ge 0$, we have
\begin{equation}
x(u_M)=\sqrt 3\int_{\phi_{\Lambda}}^{u_M}\frac{du}{\sqrt{6H(y)-u^6+\Lambda^2 u^2}}
\end{equation}
By the substitution $v=u/\sqrt{\Lambda}$ we get
\begin{equation}
x(u_M)=\frac{\sqrt 3}{\Lambda}\int_{\phi^*}^{u_M^*}\frac{dv}{\sqrt{6H^*-v^6+v^2}}.
\end{equation}
The integral above is a positive function of $z$; hence, for every fixed $z>0$ we can take

\begin{equation}
\label{lamz}
\Lambda=\Lambda(z)=\sqrt 3\int_{\phi^*}^{u_M^*}\frac{dv}{\sqrt{6H^*-v^6+v^2}}.
\end{equation}
so that $x(u_M)=1$.
\begin{remark}
\label{lambn}
The above value of $\Lambda$ is not the unique choice satisfying \eqref{kircon}; for example we could ``add a half rotation'' around the origin in the $(u,u')$-plane, requiring that
$u=-u_M$ at $x=1$. By defining
$$\frac{T_{\Lambda}}{2}=\sqrt 3\int_{-u_M}^{u_M}\frac{du}{\sqrt{6H(y)-u^6+\Lambda^2 u^2}}=\frac{T^*}{2{\Lambda}}\,$$
we obtain the condition
$$
\frac{\sqrt 3}{\Lambda}\int_{\phi^*}^{u_M^*}\frac{dv}{\sqrt{6H^*-v^6+v^2}}+\frac{T^*}{2{\Lambda}}=1.
$$
More generally, we find (for every fixed $z>0$) a sequence of admissible values
\begin{equation}
\label{admlam}
\Lambda_n=\sqrt 3\int_{\phi^*}^{u_M^*}\frac{dv}{\sqrt{6H^*-v^6+v^2}}+n\,\frac{T^*}{2}\,,\quad n=0,1,2,...
\end{equation}
and a corresponding sequence of solutions $u_n$. Due to the further oscillations, it is possible to prove that these solutions have higher Morse index. For this reason, in the rest of the section we will focus on the case $n=0$, i.e. on the choice of $\Lambda$ provided in \eqref{lamz}.
\end{remark}
Going back to the choice \eqref{lamz}, we have that the total mass of the corresponding solution $u=u_z$ only depends on $z$.
Indeed, the mass on the pendant is given by
\begin{equation}\label{masstip}
\int_0^1 \uii(x)^2\,dx=\sqrt 3\int_{\phi_{\Lambda}}^{u_M}\frac{u^2}{\sqrt{6H(y)-u^6+\Lambda^2 u^2}}\,du
=\sqrt{3} \int_{\phi^*}^{u_M^*}\frac{v^2}{\sqrt{6H^*-v^6+v^2}}\, dv,
\end{equation}
while the mass on $\mathbb{R}$ is simply
\begin{equation}
\label{massr}
\int_\R \urr(x)^2\,dx= \int_{|x|>y} \frac{\Lambda}{\cosh(2\Lambda x/\sqrt 3)}\,dx=
\int_{|x'|>z} \frac{1}{\cosh(2 x'/\sqrt 3)}\,dx'\,,
\end{equation}
and all the above integrals are (continuous) functions of $z$ alone. Thus, as $z$ spans $(0,+\infty)$, we have an interval, or \emph{allowed band}, for the values of the total mass
\begin{equation}
\label{lowmass}
\mu(z):=\int_{|x|>z} \frac{1}{\cosh(2 x/\sqrt 3)}\,dx+\sqrt{3} \int_{\phi^*}^{u_M^*}\frac{v^2}{\sqrt{6H^*-v^6+v^2}}\, dv.
\end{equation}
A rigorous study of the map $z\mapsto\mu(z)$ is contained in Appendix \ref{app:A2}. Anyway, a qualitative picture of its behavior can be easily obtained by a numerical evaluation of the second integral above (the first one is explicit), see Fig. \ref{fig:num}.
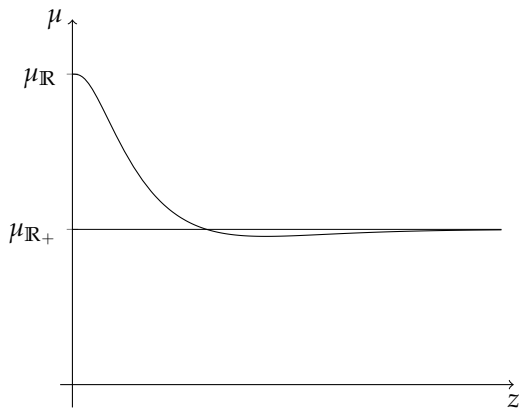
\begin{figure}[htbp]
\begin{center}
\begin{tikzpicture}
    \begin{axis}[
   scale only axis,
   axis lines=center,
   height=.35\linewidth,
   axis line style={->},
   typeset ticklabels with strut,
   x label style={below},
   y label style={left},
   xmin=-.2, xmax=7.2,
   ymin=-.2, ymax=3.2,
   xlabel={$z$},
   ylabel={$\mu$},
   xtick={\empty},
   xticklabels={$\frac{N+2}{NC_N}$},
   ytick={2.7207,1.36035},
   yticklabels={$\mu_\R$,$\mu_{\R_+}$},
   ]
            \addplot[
    black,
    mark size=0.1pt]
    table {plot.dat};
    \draw[very thin] (0,1.36035) -- (7,1.36035);
     \end{axis}
\end{tikzpicture}
\caption{numerical behavior of the map $z\mapsto \mu(z)$.}
\label{fig:num}
\end{center}
\end{figure}
Roughly, we can say that when $z\to 0^+$ we have $\phi^*\to 1^-$ and still $u_M^*\to 1^+$, so that the total mass converges to the soliton mass $\mu_\mathbb{R}$ (on the real line). On the other hand, in the limit $z\to +\infty$, an half-soliton tends to concentrate on the pendant (it can be shown by \eqref{lamz}, \eqref{lowmass} that $\Lambda(z)\to +\infty$ and $\mu(z)\to \mu_{\R^+}$ in this limit); furthermore, as we show in  Appendix \ref{app:A2}, the limit of $\mu(z)$ for $z\to +\infty$ \emph{is approached from below}, so that this function will have a minimum $\mu_m$ in $(0,+\infty)$ whose value must be \emph{strictly lower than} $\mu_\mathbb{R^+}$. This has two consequences: firstly, it follows that the band of allowed masses for the critical points of \eqref{Encrit} includes the interval
$[\mu_m,\mu_\mathbb{R})$:
$$
\mu(\mathbb{R}^+) = [\mu_m,\mu_\mathbb{R})\,.
$$
Secondly, by the theory of Grillakis-Shatah-Strauss, we can show that the corresponding solutions are orbitally stable for $z$ large (in particular, for every
$\mu\in (\mu_m,\mu_\mathbb{R^+})$ there are at least two critical points of \eqref{Encrit} in $H^1_{\mu}$, one local minimizer and one of mountain pass type).

This suggests to look for local minimizers of the energy, for masses slightly smaller than $\mu_\mathbb{R^+}$, also for more general graphs with a tip.
\begin{remark}
As we observed in Remark \ref{lambn}, one can construct other solutions $u_n$, $n=1,2,\dots$, by choosing the admissible values $\Lambda_n$, $n\ge 1$. The corresponding values of the total mass are
\[
\mu_n(z) = \mu(z) + n\underbrace{\sqrt3\int_{-u_{M}^*}^{u_{M}^*} \frac{v^2\,dv}{\sqrt{6H^*-v^6 + v^2}}}_{M^*},
\]
where $\mu(z)$ is defined in \eqref{lowmass}. Then, as $z$ spans $(0,+\infty)$, we get a sequence of allowed bands for the total masses.
Since $u^*_M\to1$ in both the limits $z\to0$ and $z\to+\infty$, it is readily checked that $M^*\to\mu_\R$ in those limits.

Finally, we mention that a second sequence of solutions can be found by gluing (at the origin in $\R$) the solitons in
\eqref{defsol} translated in the opposite direction, that is by defining
\[
\tilde u\big |_{\mathbb{R}}(x)=\left\{
                                                \begin{array}{ll}
                                                    \phi_{\Lambda}(x-y), & \hbox{if $x\ge 0$;} \\
                                                    \phi_{\Lambda}(x+y), & \hbox{if $x<0$,}
                                                  \end{array}
                                                \right.
\]
on $\R$ and $\tilde u(0) = \phi_\Lambda(y)$, $\tilde u'(0) = -2|\phi'_\Lambda(y)|$ on the pendant. This produces solutions with higher masses.
\end{remark}

\section{A compactness argument for locally minimizing sequences}\label{sec: cpt}

In this section we develop a general compactness argument for suitable locally minimizing sequences, which we shall directly apply in the proof of Theorem \ref{thm:main_intro}.

Let $\cG$ be a non-compact connected metric graph, with a non-empty compact core $K$, and having a finite number of vertices, bounded edges, and half-lines. Throughout this section, we denote by $\{\alpha_1, \alpha_2, \dots, \alpha_q\}$ the finite vertices of $\cG$, by $\{e_1,\dots,e_p\}$ its bounded edges, and by $\{\ell_1,\dots, \ell_m\}$ its half-lines. The vertices of $\cG$ are exactly $\{\alpha_1,\dots, \alpha_q\}$ plus the vertices at infinity. We identify $u \in H^1(\cG)$ with a vector $(u_1, \dots, u_m, v)$, where $u_i \in H^1(\ell_i) \simeq H^1(0,+\infty)$ is the restriction of $u$ on the half-line $\ell_i$, and $v \in H^1(K)$ is the restriction of $u$ on the compact core $K$. In turn, we denote by $v_i$ the restriction of $v$ on the edge $e_i$. If $e_i$ or $\ell_i$ is incident to $\alpha_j$, we write $e_i \succ \alpha_j$ or $\ell_i \succ \alpha_j$.

\medskip

Let us consider $0<\eta<\mu < \mu_{\cG}$, $\delta>0$ such that
\begin{equation}\label{hp dati}
\mu-\eta<\mu_{\R_+}, \quad \text{and} \quad (1+\delta)\eta<\mu.
\end{equation}
Let us also define
\begin{equation}\label{A B gen}
\begin{split}
\mathcal{A}& := \left\{ u \in H^1_\mu(\cG): \ \int_K u^2 \ge \eta \right\}, \\
\mathcal{B}& := \left\{ u \in H^1_\mu(\cG): \ \eta \le \int_K u^2 \le (1+\delta)\eta \right\}.
\end{split}
\end{equation}
Notice that $\mathcal{B}$ represents a neighborhood of $\pa \cA$ in $\cA$, in the $H^1$ topology. The main result of this section is the following variational principle.

\begin{proposition}\label{prop: comp}
Suppose that \eqref{hp dati} holds, and let
\begin{equation}\label{in min gen}
-\infty  < \inf_{u \in \cA} \ \En(u,\cG) < \inf_{u\in \cB} \ \En(u,\cG).
\end{equation}
Then $E(\cdot, \cG)$ constrained on $H^1_\mu(\cG)$ has a critical point which is obtained as local minimizer in the set $\cA$.
\end{proposition}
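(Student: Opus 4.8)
The plan is to produce $u$ as a genuine minimizer of $\En(\cdot,\cG)$ over the relatively open set $\{\int_K u^2>\eta\}$ inside $H^1_\mu(\cG)$, so that the core constraint turns out to be inactive and a Lagrange multiplier argument yields \eqref{eq:NLSE} with Kirchhoff conditions. First I would record the basic coercivity: since $\mu<\mu_\cG$, \eqref{coerc1} gives $\En(u,\cG)\ge \tfrac12(1-(\mu/\mu_\cG)^2)\|u'\|_{L^2(\cG)}^2\ge0$ on $H^1_\mu(\cG)$, so any minimizing sequence $(u^n)\subset\cA$ for \eqref{in min gen} is bounded in $H^1(\cG)$. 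The strict inequality in \eqref{in min gen} is used at once: since $\En(u^n,\cG)\to\inf_\cA<\inf_\cB$, eventually $u^n\notin\cB$, and as $u^n\in\cA$ this forces $\int_K (u^n)^2>(1+\delta)\eta$. In particular the mass carried by the half-lines satisfies $\sum_i\int_{\ell_i}(u^n)^2\le\mu-(1+\delta)\eta<\mu-\eta<\mu_{\R_+}$ by \eqref{hp dati}.

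Up to a subsequence $u^n\weak u$ in $H^1(\cG)$; compactness of $K$ gives $u^n\to u$ in $L^2(K)\cap L^6(K)$, so $\int_K u^2\ge(1+\delta)\eta$, i.e. $u$ already lies in the interior region $\{\int_K u^2>\eta\}$. Writing $w^n:=u^n-u\weak0$ and using additivity of the energy over the edges together with the Brezis--Lieb lemma, $\En(u^n,\cG)=\En(u,\cG)+\En(w^n,\cG)+o(1)$ and $\|w^n\|_{L^2(\cG)}^2\to\mu_\infty:=\mu-\|u\|_{L^2(\cG)}^2$. Since $w^n\to0$ in $L^6(K)$, the core contributes $o(1)$ to $\En(w^n,\cG)$, while on each half-line $\ell_i\simeq\R_+$ the residual carries mass $\le\mu-(1+\delta)\eta<\mu_{\R_+}$; applying \eqref{coerc1} on $\R_+$ (whose critical mass is $\mu_{\R_+}$) edge by edge gives $\En(w^n|_{\ell_i},\ell_i)\ge0$, hence $\liminf_n\En(w^n,\cG)\ge0$ and therefore $\En(u,\cG)\le\inf_\cA$.

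The heart of the argument --- and the step I expect to be the main obstacle --- is to rule out loss of mass, i.e. to show $\mu_\infty=0$, which I would do through monotonicity of the constrained level. Let $c_*(m)$ denote the infimum in \eqref{in min gen} when $\mu$ is replaced by $m$ (with $\eta$ fixed). Grafting onto a near-optimal competitor a bump supported far out on a half-line, carrying mass $s<\mu_{\R_+}$ with energy $<\epsilon$ --- possible since $\Ecal_{\R_+}(s)=0$ for $s\le\mu_{\R_+}$ by \eqref{eq:gsmu} --- and using a cut-off to preserve continuity at the vertices, shows that $c_*(m')\le c_*(m'')$ whenever $m'\ge m''$ and $m'-m''<\mu_{\R_+}$. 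Suppose now $\mu_\infty>0$. If $\liminf_n\En(w^n,\cG)>0$, the same grafting applied to $u$ yields $\tilde u\in\cA$ of full mass $\mu$ with $\En(\tilde u,\cG)<\En(u,\cG)+\liminf_n\En(w^n,\cG)=\inf_\cA$, absurd. Otherwise $\liminf_n\En(w^n,\cG)=0$, so $\En(u,\cG)=\inf_\cA=c_*(\mu)$; as $u$ competes for $c_*(\mu_0)$ with $\mu_0:=\mu-\mu_\infty$, monotonicity forces $c_*(\mu_0)=c_*(\mu)$ and $u$ attains $c_*(\mu_0)$ with inactive core constraint. Then $u$ solves \eqref{eq:NLSE} on $\cG$ with multiplier $\lambda$, necessarily positive (a nontrivial $H^1$ solution decaying along the half-lines forces $\lambda>0$), and the mass-increasing scaling $\sqrt{1+t}\,u$ --- admissible since it only enlarges the core mass --- satisfies $\frac{d}{dt}\En(\sqrt{1+t}\,u,\cG)|_{t=0}=-\tfrac12\lambda\mu_0<0$ by the Nehari identity. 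Hence $c_*((1+t)\mu_0)<c_*(\mu_0)=c_*(\mu)$ for small $t>0$, while monotonicity (note $\mu-(1+t)\mu_0<\mu_\infty<\mu_{\R_+}$) gives $c_*(\mu)\le c_*((1+t)\mu_0)$, a contradiction. Thus $\mu_\infty=0$.

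Finally, $\mu_\infty=0$ upgrades the convergence: $\|u^n\|_{L^2}\to\|u\|_{L^2}$ with $u^n\weak u$ gives $u^n\to u$ in $L^2(\cG)$, and lower semicontinuity together with $\En(u^n,\cG)\to\inf_\cA$ forces $u^n\to u$ in $H^1(\cG)$. Then $u\in\cA$ attains $\inf_\cA$ and satisfies $\int_K u^2\ge(1+\delta)\eta>\eta$, so it minimizes $\En(\cdot,\cG)$ over the relatively open subset $\{v\in H^1_\mu(\cG):\int_K v^2>\eta\}$; being an interior (with respect to the core constraint) local minimizer, the constraint $\int_K u^2\ge\eta$ is inactive, and the Lagrange multiplier rule yields \eqref{eq:NLSE} with Kirchhoff conditions, so $u$ is the desired critical point obtained as a local minimizer in $\cA$.
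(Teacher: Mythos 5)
Your argument takes a genuinely different route from the paper's. The paper produces compactness through Ekeland's principle (Lemma \ref{lem:min-PS}, where \eqref{in min gen} keeps perturbed minimizers away from $\pa\cA$), a decreasing rearrangement on the half-lines combined with a scaling restoring continuity at the vertices (Lemma \ref{lem: spec PS}), the compact embedding of monotone functions into $L^6(\R^+)$, and finally the sign of the Lagrange multiplier to pass from $L^6$ to $H^1$ convergence. You instead run a concentration--compactness scheme: Brezis--Lieb splitting of a minimizing sequence, monotonicity of the doubly-constrained level $c_*(m)$ under grafting of low-energy bumps at infinity, and a Nehari-type scaling to exclude mass loss. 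Most of this is sound: the use of the strict inequality in \eqref{in min gen} to force $\int_K(u^n)^2>(1+\delta)\eta$ eventually, the bound $\liminf_n\En(w^n,\cG)\ge0$ via \eqref{coerc1} applied on each half-line, the grafting monotonicity, Case 1 of the dichotomy, and the final upgrade to strong convergence are all correct.

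The genuine gap is in Case 2, in the claim that the multiplier of the limit is ``necessarily positive (a nontrivial $H^1$ solution decaying along the half-lines forces $\lambda>0$)''. This is false without positivity of $u$: a solution of \eqref{eq:NLSE} with Kirchhoff conditions can vanish identically on \emph{all} half-lines while being nontrivial and sign-changing on the compact core, with $\lambda\le0$. For instance, if $K$ contains a loop of length $L$ attached at a single vertex (cycles are allowed in the graphs of Section \ref{subsec:no_tip}, for which Proposition \ref{prop: comp} must work), then for suitable $\lambda<0$ the equation $u''+u^5=\lambda u$ admits sign-changing $L$-periodic solutions vanishing at the vertex with equal derivatives at the two ends of the loop; extended by zero on the half-lines, they lie in $H^1(\cG)$, satisfy continuity and Kirchhoff at every vertex, and trivially ``decay along the half-lines''. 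Nothing in your construction prevents the weak limit from being of this type, and if $\lambda\le 0$ your key derivative $\frac{d}{dt}\En(\sqrt{1+t}\,u,\cG)|_{t=0}=-\tfrac12\lambda\mu_0$ is no longer negative, so the contradiction excluding $\mu_\infty>0$ collapses.

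The repair requires exactly the positivity machinery the paper develops and that you skipped. Replace the minimizing sequence by $(|u^n|)$ at the outset (energy and both constraints are unchanged), so that $u\ge0$; in Case 2, $u$ is then a nonnegative minimizer for $c_*(\mu_0)$ with inactive core constraint, hence solves \eqref{eq:NLSE}. If $u$ vanished identically on every half-line, then at each vertex $\alpha_j$ incident to a half-line one has $u(\alpha_j)=0$, nonnegativity forces every inward derivative along the bounded edges at $\alpha_j$ to be nonnegative, Kirchhoff forces them all to vanish, and uniqueness for the Cauchy problem plus connectedness propagates $u\equiv0$ over $\cG$, contradicting $\int_K u^2\ge(1+\delta)\eta$ --- this is precisely the argument of Lemma \ref{lem: lim pos in 0}. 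Hence $u$ is nontrivial on some half-line, where the constancy of $\tfrac12|u'|^2+\tfrac16 u^6-\tfrac{\lambda}{2}u^2$ together with $u,u'\in L^2(\R^+)$ forces this constant to vanish and $\lambda>0$ (the analogue of Step 2 in the paper's final lemma). With this insertion your Case 2 closes and the rest of your proof stands.
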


The proof is divided into several intermediate lemmas.

\begin{lemma}\label{lem:min-PS}
Let $\{\bar u_n\}$ be a minimizing sequence for $\En(\cdot\,,\cG)|_{\cA}$. Then there exists a (possibly different) minimizing sequence $\{u_n\}$ satisfying the additional properties that
\[
\|d\En(u_n,\cG)\|_* \to 0, \quad \text{and} \quad \|u_n- \bar{u}_n\|_{H^1_\mu(\cG)} \to 0
\]
as $n \to \infty$, where $\|\cdot\|_*$ denotes the norm in the dual to the tangent spaces $T_{u_n} (H^1_\mu(\cG))$.
\end{lemma}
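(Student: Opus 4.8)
The plan is to obtain $\{u_n\}$ from $\{\bar u_n\}$ by a direct application of Ekeland's variational principle on the set $\cA$, viewed as a complete metric space; the only delicate point is to rule out a boundary obstruction by means of the gap hypothesis \eqref{in min gen}.

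First I would record that $M := H^1_\mu(\cG)$ is a smooth, complete submanifold of $H^1(\cG)$, being the regular level set $\{G=\mu\}$ of the smooth functional $G(u)=\int_\cG u^2$, whose differential $dG(u)=2u$ never vanishes on $M$; its tangent space is $T_uM=\{\varphi\in H^1(\cG):\int_\cG u\varphi=0\}$. The subset $\cA$, being defined by the closed condition $\int_K u^2\ge\eta$, is closed in $M$, hence itself a complete metric space for the distance $d(u,v)=\|u-v\|_{H^1(\cG)}$. On $\cA$ the functional $\En(\cdot,\cG)$ is continuous in the $H^1$ topology, since both terms of \eqref{Encrit} are continuous under strong $H^1$ convergence (using $H^1(\cG)\hookrightarrow L^6(\cG)$), and it is bounded below by \eqref{in min gen}.

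Next, setting $\eps_n:=\En(\bar u_n,\cG)-\inf_{\cA}\En(\cdot,\cG)\to 0$, I would apply Ekeland's principle with parameter $\lambda_n=\sqrt{\eps_n}$ to produce $u_n\in\cA$ such that
\[
\En(u_n,\cG)\le\En(\bar u_n,\cG),\qquad \|u_n-\bar u_n\|_{H^1(\cG)}\le\sqrt{\eps_n},
\]
and
\[
\En(w,\cG)\ge\En(u_n,\cG)-\sqrt{\eps_n}\,\|w-u_n\|_{H^1(\cG)}\quad\text{for every }w\in\cA.
\]
The first two relations already yield that $\{u_n\}$ is minimizing for $\En(\cdot,\cG)|_{\cA}$ and that $\|u_n-\bar u_n\|_{H^1(\cG)}\to 0$.

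The crucial step, and the main obstacle, is to convert the almost-minimality inequality into the estimate $\|d\En(u_n,\cG)\|_*\to 0$: a priori Ekeland's inequality gives only a one-sided bound at points of $\partial\cA=\{\int_K u^2=\eta\}$. This is exactly where hypothesis \eqref{in min gen} is decisive. Since $\En(u_n,\cG)\le\En(\bar u_n,\cG)<\inf_{\cB}\En(\cdot,\cG)$ for $n$ large, the point $u_n$ cannot belong to $\cB$, so $\int_K u_n^2>(1+\delta)\eta$ and $u_n$ lies in the interior of $\cA$ relative to $M$. Consequently, for every $\varphi\in T_{u_n}M$ the normalized curve $w(t)=\sqrt{\mu}\,(u_n+t\varphi)/\|u_n+t\varphi\|_{L^2(\cG)}$ lies in $\cA$ for $|t|$ small, with $w(0)=u_n$, $w'(0)=\varphi$ (using $\int_\cG u_n\varphi=0$) and $\|w(t)-u_n\|_{H^1(\cG)}=|t|\,\|\varphi\|_{H^1(\cG)}+o(t)$. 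Inserting $w=w(t)$ into the almost-minimality inequality and letting $t\to 0^{\pm}$ gives $|\langle d\En(u_n,\cG),\varphi\rangle|\le\sqrt{\eps_n}\,\|\varphi\|_{H^1(\cG)}$ for all admissible $\varphi$, whence $\|d\En(u_n,\cG)\|_*\le\sqrt{\eps_n}\to 0$, as required. The admissibility of $w(t)$ and the computation of $w'(0)$ are routine and rely only on $u_n$ being interior to $\cA$.
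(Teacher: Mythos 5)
Your proof is correct and takes essentially the same route as the paper: the paper's entire proof is the single remark that the lemma follows directly from Ekeland's variational principle, with assumption \eqref{in min gen} used exactly as you use it, namely to keep almost-minimizers away from $\partial\cA$ so that the two-sided differential estimate is available. Your write-up simply supplies the standard details (completeness of $\cA$, the normalized curves $w(t)$ in $T_{u_n}H^1_\mu(\cG)$, and the resulting bound $\|d\En(u_n,\cG)\|_*\le\sqrt{\eps_n}$) that the paper leaves implicit.
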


We shall refer to $\{u_n\}$ as to a \emph{minimizing Palais-Smale} sequence for $\En(\cdot, \cG)|_{\cA}$.

\begin{proof}
This is a direct consequence of Ekeland's variational principle in the present setting, where we use assumption \eqref{in min gen} in order to ensure that minimizing sequences in $\cA$ do not approach the boundary $\pa \cA$.
\end{proof}

\begin{lemma}\label{lem: lim pos in 0}
Let $\{\bar{ u}_n\}$ be a minimizing sequence for $\En(\cdot\,,\cG)|_{\cA}$, with $\bar u_n \ge 0$ for every $n$. Then there exists a minimizing Palais-Smale sequence $\{u_n\}$, a function $u \in H^1(\cG)$, and $\lambda \in \R$ such that, up to a subsequence, we have
\begin{equation}\label{conv1}
\begin{cases}
\|u_n- \bar{u}_n\|_{H^1}  \to 0, \\ u_n, \bar{u}_n \weak u \quad \text{weakly in }H^1(\cG), \\ u_n, \bar{u}_n \to u \quad \text{locally uniformly in $\cG$},
\end{cases}
\end{equation}
as $n \to \infty$, and the limit $u=(u_1,\dots,u_m,v_1,\dots,v_p)$ satisfies
\begin{equation}\label{eq limit}
\begin{cases}
-u_i'' = \lambda u_i + u_i^5, \quad u_i > 0 & \text{on $\ell_i$, for every $i$} \\
-v_i'' = \lambda v_i + v_i^5, \quad v_i > 0 & \text{on $e_i$, for every $i$} \\
\sum_{e_i \succ \alpha_j} v_i'(\alpha_j) + \sum_{\ell_i \succ \alpha_j} u_i'(\alpha_j) = 0 & \text{for every $j =1, \dots, q$} \\
\end{cases}
\end{equation}
and at each vertex $\alpha_j$ we have that $u(\alpha_j) >0$.
\end{lemma}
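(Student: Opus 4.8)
The plan is to obtain the convergences \eqref{conv1} by standard weak-compactness on each piece of $\cG$, to identify the limiting equation by passing to the limit in the Euler--Lagrange equation of the approximate critical points, and finally to upgrade $u\ge0$ to strict positivity; the decisive point is that the compact-core constraint defining $\cA$ prevents the limit from vanishing. First I would invoke Lemma \ref{lem:min-PS} to replace $\{\bar u_n\}$ by a minimizing Palais--Smale sequence $\{u_n\}$ with $\|u_n-\bar u_n\|_{H^1}\to0$; since $\bar u_n\ge0$, the limit will satisfy $u\ge0$. The coercivity estimate \eqref{coerc1}, valid because $\mu<\mu_\cG$ by \eqref{hp dati}, together with the mass constraint $\|u_n\|_{L^2}^2=\mu$, bounds $\{u_n\}$ in $H^1(\cG)$. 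Hence, up to a subsequence, $u_n\weak u$ in $H^1(\cG)$, and by the compact embedding $H^1\embed C$ on each bounded edge and on every bounded portion of a half-line, $u_n\to u$ locally uniformly; the same limits hold for $\bar u_n$ since $\|u_n-\bar u_n\|_{H^1}\to0$. This gives \eqref{conv1}.

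Next I would extract the Lagrange multiplier. The Palais--Smale condition $\|d\En(u_n,\cG)\|_*\to0$ on the tangent space to $H^1_\mu(\cG)$ produces, by decomposing a test function along $u_n$ and its orthogonal complement, a sequence $\lambda_n\in\R$ with
\[
\int_\cG u_n'\varphi'-\int_\cG u_n^5\varphi=\lambda_n\int_\cG u_n\varphi + o(1)\|\varphi\|_{H^1} \qquad \forall\,\varphi\in H^1(\cG),
\]
where $\lambda_n\mu=\int_\cG|u_n'|^2-\int_\cG u_n^6$. The right-hand side is bounded by the $H^1$-bound and the Gagliardo--Nirenberg inequality, so $\lambda_n\to\lambda$ along a subsequence. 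Testing with $\varphi$ supported in a single edge, and with $\varphi$ not vanishing at a prescribed vertex, and letting $n\to\infty$ — the nonlinear term converges because $u_n\to u$ uniformly and boundedly on the compact support of $\varphi$ — yields the weak formulation of $-u''=\lambda u+u^5$ on each edge together with the Kirchhoff balance at every $\alpha_j$, i.e. the first three lines of \eqref{eq limit}.

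For strict positivity I would first use $\cA$: since $\bar u_n\to u$ uniformly on the compact core $K$ and $\int_K\bar u_n^2\ge\eta$, passing to the limit gives $\int_K u^2\ge\eta>0$, so $u\not\equiv0$. On each open edge $u\ge0$ solves the ODE $u''=-\lambda u-u^5$, whose right-hand side is locally Lipschitz in $u$; an interior zero would therefore be a minimum with vanishing derivative, forcing $u\equiv0$ on that edge by Cauchy uniqueness. If instead $u(\alpha_j)=0$ at some vertex, then on every incident edge $u\ge0$ has a boundary minimum at $\alpha_j$, so all derivatives pointing away from $\alpha_j$ are nonnegative; the Kirchhoff condition forces each to vanish, and uniqueness again gives $u\equiv0$ on all incident edges. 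Propagating along the connected graph, either possibility yields $u\equiv0$ on $\cG$, contradicting $u\not\equiv0$. Hence $u>0$ on every edge and $u(\alpha_j)>0$ at every vertex, completing \eqref{eq limit}.

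I expect the delicate steps to be (i) the handling of the Kirchhoff condition in the positivity argument, where one must argue simultaneously at all vertices and use connectivity, since a single interior or boundary zero propagates to the whole graph; and (ii) the bookkeeping ensuring that $\lambda_n$ is genuinely bounded and that the nonlinear term passes to the limit against test functions nonzero at the vertices. By contrast, nontriviality of $u$, normally the crux of such arguments, is here immediate from the defining constraint $\int_K u^2\ge\eta$ of $\cA$ — this is precisely the reason for working in $\cA$ rather than in all of $H^1_\mu(\cG)$.
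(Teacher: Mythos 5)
Your proposal is correct and follows essentially the same route as the paper: Ekeland via Lemma \ref{lem:min-PS}, boundedness from \eqref{coerc1}, the approximate multiplier $\lambda_n = \frac{1}{\mu}\left(\int_{\cG}|u_n'|^2 - |u_n|^6\right)$ passed to the limit in the weak formulation, and the same vertex-zero propagation argument (nonnegativity plus Kirchhoff forces all incident derivatives to vanish, Cauchy uniqueness kills the incident edges, connectivity spreads this to all of $\cG$, contradicting $\int_K u^2 \ge \eta$). The only cosmetic difference is that you handle interior zeros directly by Cauchy uniqueness, whereas the paper deduces interior positivity at the end from the strong maximum principle once all vertex values are positive.
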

Since each edge is identified with an interval $(0,d_i)$, $v_i'(\alpha_j)$ is a shorthand notation for $v_i'(0^+)$ or $-v_i'(d_i^-)$, according to the fact that the coordinate is $0$ or $d_i$ at $\alpha_j$. Similarly, writing that $u(\alpha_j) >0$ we mean that:
\[
\text{if } e_{i_1},\dots, e_{i_{p}}, \ell_{k_1}, \dots, \ell_{k_{q}} \succ \alpha_j, \quad \text{then} \quad v_{i_1}(\alpha_j) = \dots = v_{i_{p}}(\alpha_j) = u_{k_1}(\alpha_j) = \dots = u_{k_{q}}(\alpha_j) >0.
\]
\begin{proof}
By Lemma \ref{lem:min-PS}, there exists a minimizing Palais-Smale sequence $\{u_n\}$ such that $\|u_n - \bar{u}_n\|_{H^1} \to 0$. By \eqref{coerc1}, since $\mu<\mu_{\cG}$ we have that $\{u_n\}$ is bounded, and hence up to a subsequence we have
\[
\begin{split}
u_n \weak u \quad \text{weakly in }H^1(\cG), \\
u_n \to u \quad \text{locally uniformly in $\cG$}.
\end{split}
\]
Let now $\varphi \in H^1_\mu(\cG)$. Arguing as in \cite[Lemma 3]{BerLio2} (see also \cite{MR3456809}), the fact that $\{u_n\}$ is a bounded Palais-Smale sequence yields
\begin{equation}\label{weakeq}
d\En(u_n)[\varphi] - \lambda_n \int_{\cG} u_n \varphi  = o(1) \|\varphi\|_{H^1}
\end{equation}
as $n \to \infty$, for some approximate Lagrange multiplier $\lambda_n \in \R$, where $\lambda_n$ is given by
\[
\lambda_n = \frac{1}{\mu} \int_{\cG} |u_n'|^2 - |u_n|^6.
\]
It follows that $\{\lambda_n\}$ is bounded, and up to a further subsequence $\lambda_n \to \lambda \in \R$. Thus, by weak convergence, \eqref{weakeq} implies that \eqref{eq limit} holds (see \cite[Prop. 3.3]{AST1} for the details). Notice that, since  $\|u_n - \bar{u}_n\|_{H^1} \to 0$ and $\bar u_n \ge 0$, we have $u \ge 0$ as well. Now we show that $u(\alpha_j) > 0$ for every $j$. By contradiction, let $u(\alpha_j) = 0$ for some $j$, say $j=1$. Since by convergence $u \ge 0$, we have that
\[
v_i'(\alpha_1), u_k'(\alpha_1) \ge 0 \quad \text{whenever } e_i, \ell_k \succ \alpha_1.
\]
Thus, the Kirchhoff condition implies that in fact these derivatives are all equal to $0$, and by uniqueness of the solutions for the Cauchy problems associated with the NLS on intervals, we deduce that $v_i \equiv 0$ for $e_i \succ \alpha_1$, and $u_k \equiv 0$ for $\ell_k \succ \alpha_1$. This implies that $u(\alpha_j) = 0$ also for all the vertices directly connected to $\alpha_1$. Since the graph is connected and has finitely many vertices, iterating this argument a finite number of times, we infer that $u \equiv 0$ in $\cG$. However, this is not possible since $u_n \to u$ uniformly on compact sets, and in particular
\[
\int_K u^2 = \lim_{n \to \infty} \int_{K} u_n^2 \ge \eta>0.
\]
This contradiction shows that necessarily $u(\alpha_j) >0$ for every $j$, and this, by the strong maximum principle, finally gives the strict positivity of $u$.
\end{proof}

In the next lemma we select a particular minimizing sequence having some special symmetry properties, which will be useful in the study of its convergence.

\begin{lemma}\label{lem: spec PS}
There exists a minimizing sequence $\{\hat{u}_n = (\hat u_{1,n},\dots, \hat u_{m,n}, \hat v_n) \}$ for $\En(\cdot\,, \cG)$ on $\cA$, with $\hat{u}_n \ge 0$, and with the property that $\hat u_{i,n}$ is monotone decreasing on $\ell_i = (0,+\infty)$, for every $i=1,\dots,m$.
\end{lemma}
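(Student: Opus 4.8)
\emph{Plan.} The statement bundles two properties, nonnegativity and monotonicity along the half-lines, and the plan is to obtain both by rearranging an arbitrary minimizing sequence for $\En(\cdot,\cG)|_{\cA}$, checking at each step that we stay inside $\cA$ and that the energy does not increase.

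First I would dispose of the sign. Given any minimizing sequence $\{\bar u_n\}$ in $\cA$, I replace each $\bar u_n$ by $|\bar u_n|$. This leaves every $L^p$-norm unchanged, so in particular it preserves the total mass $\mu$ and the quantity $\int_K \bar u_n^2$, whence $|\bar u_n|\in\cA$; moreover $|\bar u_n|$ is still continuous at the vertices, so it belongs to $H^1(\cG)$, and since $\bigl||u|'\bigr|\le|u'|$ a.e. one has $\En(|\bar u_n|,\cG)\le \En(\bar u_n,\cG)$. Thus $\{|\bar u_n|\}$ is again a minimizing sequence, and we may assume $\bar u_n\ge0$ from the outset.

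Next, to create the monotonicity, the natural tool is the decreasing rearrangement along the half-lines. For each finite vertex $\alpha_j$ carrying at least one half-line, let $S_j$ be the star formed by the half-lines incident to $\alpha_j$; I would replace the restriction of $u$ to $S_j$ by the function that is symmetric and non-increasing along each ray and equimeasurable with $u|_{S_j}$. Being equimeasurable, this rearrangement preserves $\int u^2$ and $\int u^6$ on the half-lines --- hence the total mass and, since it does not touch $K$, the constraint $\int_K u^2\ge\eta$ that defines $\cA$ --- while by the one-dimensional Polya--Szeg\H{o} inequality it does not increase $\int|u'|^2$. Consequently the energy does not increase, each $\hat u_{i,n}$ becomes non-increasing on $\ell_i$, and $\{\hat u_n\}$ remains a minimizing sequence in $\cA$.

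The delicate point, and the one I expect to be the main obstacle, is continuity at the vertices. Indeed, the rearrangement pushes the maximum along each half-line onto the attaching vertex, raising the value at $\alpha_j$ from $u(\alpha_j)$ to $w_j:=\supess_{S_j}u\ge u(\alpha_j)$; the star rearrangement makes all half-lines incident to $\alpha_j$ agree at this common value $w_j$ (this is essential at the central vertex of the model graph of Fig.~\ref{fig:tip}, where two half-lines meet), but it destroys the match with the bounded edges meeting at $\alpha_j$. It is exactly here that the freedom in choosing the compact-core component $\hat v_n$ comes into play: after imposing the raised boundary data $\{w_j\}$, I would redefine $\hat v_n$ on $K$ so as to attain these values, compensating the change of mass on $K$ against the mass on the half-lines so that the total mass stays $\mu$ and $\int_K\hat v_n^2\ge\eta$ is retained. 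The crux is to perform this reconnection \emph{without increasing the energy}: because of the focusing term one cannot simply cap the half-line profiles at the old vertex value (this would lower $\int u^6$ and might raise $\En$), so one must show that the gain in Dirichlet energy dominates. I expect the smallness of the mass carried by the half-lines --- guaranteed by $\mu-\eta<\mu_{\R_+}$ in \eqref{hp dati} together with the coercivity estimate \eqref{coerc1} --- to be precisely what makes this energy bookkeeping go through.
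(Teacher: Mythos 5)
Your opening steps (passing to $|\bar u_n|$, then rearranging along the half-lines) coincide with the paper's, but the proposal fails at the two points you yourself flag. First, the ``star rearrangement'' at a vertex $\alpha_j$ does \emph{not} satisfy the P\'olya--Szeg\H{o} inequality when three or more half-lines are incident to $\alpha_j$: for the symmetric rearrangement onto a $k$-star one needs almost every level set of $u|_{S_j}$ to have at least $k$ preimages, which fails in general (think of a profile concentrated on a single ray), and the Dirichlet integral can strictly increase. This is not a marginal case for this paper: in the no-tip case ($h_1$)--($h_2$) the graph has $m\ge 3$ half-lines, and they may all be attached at the same vertex (e.g.\ a triangle with three half-lines at a common vertex satisfies the hypotheses of Theorem \ref{thm:main_intro}). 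Only for $k=2$, as in the model graph, is your star rearrangement harmless. Second, the reconnection on $K$ with the raised boundary values $w_j$ --- which you correctly identify as the crux --- is left as an expectation, not a proof; there is no evident way to attain possibly large boundary data on a fixed compact core at prescribed mass without increasing the energy, and the smallness $\mu-\eta<\mu_{\R^+}$ does not by itself provide one.

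The paper's proof removes both difficulties with one device missing from your argument: the mass-invariant critical scaling. It takes the decreasing rearrangement $u^*_{i,n}$ of $\bar u_{i,n}$ on each half-line $\ell_i\simeq\R^+$ \emph{separately} (for which P\'olya--Szeg\H{o} is unproblematic) and never modifies the compact-core component $\bar v_n$. Continuity at the vertices is restored not by lifting $K$ up to the rearranged values, but by bringing the half-line profiles \emph{down}: one sets $\hat u_{i,n}(x)=\sqrt{\theta}\,u^*_{i,n}(\theta x)$ with $\theta\in(0,1]$ chosen so that $\hat u_{i,n}(0)=\bar u_n(\alpha_j)$, which is possible because $u^*_{i,n}(0)=\sup_{\R^+}\bar u_{i,n}\ge\bar u_n(\alpha_j)$ and $\bar u_n(\alpha_j)>0$ for $n$ large --- this is where Lemma \ref{lem: lim pos in 0} (positivity of the local uniform limit at the vertices) enters, an ingredient your proposal never uses. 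This scaling preserves the $L^2$-mass on each half-line exactly, so $\hat u_n\in\cA$ with unchanged mass on $K$, and it multiplies each half-line energy by $\theta^2$; since $\int_{\ell_i}(u^*_{i,n})^2\le\mu-\eta<\mu_{\R^+}$ by \eqref{hp dati}, inequality \eqref{coerc1} gives $\En(u^*_{i,n},\R^+)\ge 0$, hence $\En(\hat u_{i,n},\R^+)=\theta^2\En(u^*_{i,n},\R^+)\le\En(u^*_{i,n},\R^+)$ and the total energy does not increase. In short: no surgery on $K$ is needed, and the energy bookkeeping you hoped for is replaced by nonnegativity of the half-line energies combined with the $\theta^2$-homogeneity of the critical functional under mass-preserving dilations.
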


\begin{proof}
Let $\{\bar{u}_n =(\bar u_{1,n},\dots,\bar u_{m,n}, \bar v_n)\}$ be a minimizing sequence. If necessary replacing $\bar{u}_n$ with $(|\bar u_{1,n}|,\dots,|\bar u_{m,n}|, |\bar v_n|)$, we can suppose that $\bar{u}_n \ge 0$. By Lemma \ref{lem: lim pos in 0}, $u_n \to u$ locally uniformly on $\cG$, with $u$ positive in all the (finitely many) bounded vertices. Thus, for sufficiently large $n$, we have that $u_n$ is positive in all the vertices as well.

Let us consider $u_n^* = (u_{1,n}^*,\dots, u_{m,n}^*, \bar v_n)$, where $u_{i,n}^* \in H^1(\R^+)$ is the decreasing rearrangement of $\bar u_{i,n}$. By well known properties of rearrangements, we have that $u_n^* \in H^1(\R_+,\R^m) \times H^1(K)$, that $|u_n^*|_{L^2(\cG)}^2 = \mu$, and that $\En(u_n^*,\R) \le \En(\bar u_n,\R)$, for every $n$. However, in general $u_n^* \not \in H^1_\mu(\cG)$, since when we rearrange we could loose the continuity in the vertices. To overcome this problem, we observe that if $\alpha_j$ is the initial vertex of $\ell_i$, then
\[
u_{i,n}^*(0) = \sup_{\R^+} u_{i,n}^* = \sup_{\R^+} \bar u_{i,n} \ge \bar u_{i,n}(0) = \bar{{u}}_n(\alpha_j)>0.
\]
Therefore, we can consider
\[
\hat u_{i,n}(x):= \sqrt{\theta} u_{i,n}^*(\theta x),
\]
where $0 <\theta \le 1$ is chosen in such a way that $\hat u_{i,n}(0) = \bar{u}_n(\alpha_j)$. Moreover,
\[
\int_0^\infty (\hat u_{i,n})^2 = \int_0^\infty (u_{i,n}^*)^2 \le \int_{\cG} (u_n^*)^2- \int_K (u_n^*)^2 \le \mu -\eta <\mu_{\R^+},
\]
for every $i$, due to \eqref{hp dati}. In particular, we have that $\hat{u}_n
= (\hat u_{1,n},\dots, \hat u_{m,n}, \bar v_n) \in H^1_\mu(\cG)$, for every $n$; moreover, by \eqref{coerc1} applied to each restriction $u_{i,n}^*$, we have that $\En(u_{i,n}^*,\R^+) \ge 0$ and, since $\theta \le 1$,
\[
\En(\hat u_{i,n}, \R^+) = \theta^2 \En(u_{i,n}^*,\R^+) \le \En(u_{i,n}^*,\R^+).
\]
This means that $\{\hat{ u}_n\} \subset \cA$ is a minimizing sequence with the required properties.
\end{proof}

The advantage of working with $\{\hat{u}_n\}$ stays in the fact that its components $\hat u_{i,n}$ are decreasing functions on $\ell_i \simeq \R^+$, and the class $H_*(\R^+)$ of decreasing functions is compactly embedded in $L^p(\R^+)$, for every $2<p<+\infty$ (see the appendix in \cite{BerLio}). This yields the following lemma, which completes the proof of Proposition \ref{prop: comp}.

\begin{lemma}
Let $\{\hat{u}_n\}$ be the minimizing sequence given by Lemma \ref{lem: spec PS}. Then, up to a subsequence, $\hat{u}_n \to u$, and $u$ is a critical point of $\En(\cdot\,,\cG)$ on $H^1_\mu(\cG)$ obtained as local minimizer in $\cA$.
\end{lemma}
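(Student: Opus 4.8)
The plan is to upgrade the weak convergence of $\{\hat u_n\}$ to strong $H^1(\cG)$-convergence; the only obstruction is a possible loss of $L^2$-mass escaping to infinity along the half-lines, and once this is excluded the identification of $u$ as the claimed critical point is routine.

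First I would apply Lemma \ref{lem: lim pos in 0} to the nonnegative minimizing sequence $\{\hat u_n\}$, obtaining (up to a subsequence) the common weak limit $u\ge0$ and a multiplier $\lambda$, with $u$ strictly positive and solving \eqref{eq limit}. Two strong convergences are available for free: on the compact core $K$ the embedding $H^1(K)\hookrightarrow L^6(K)$ is compact, while on each half-line $\ell_i\simeq\R^+$ the component $\hat u_{i,n}$ is decreasing and $H^1$-bounded, so the compact embedding $H_*(\R^+)\hookrightarrow L^6(\R^+)$ of \cite{BerLio} applies. Together these yield $\hat u_n\to u$ strongly in $L^6(\cG)$, i.e. $\int_\cG|w_n|^6\to0$, where $w_n:=\hat u_n-u\weak0$ in $H^1(\cG)$.

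The heart of the argument is to rule out mass loss, i.e. to prove $m:=\int_\cG u^2=\mu$. I would first record that $\lambda<0$: multiplying $-u_i''=\lambda u_i+u_i^5$ by $u_i'$ and integrating on $\R^+$, the decay of $u_i\in H^1(\R^+)$ forces $\tfrac12(u_i')^2=-\tfrac{\lambda}{2}u_i^2-\tfrac16 u_i^6$, whence $\lambda\le-\tfrac13 u_i^4<0$ by strict positivity of $u_i$. Next, expanding along the weak convergence gives $\int_\cG|\hat u_n'|^2=\int_\cG|u'|^2+\int_\cG|w_n'|^2+o(1)$ and $\int_\cG\hat u_n^2=m+\int_\cG w_n^2+o(1)$. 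Combining the Palais--Smale relation $\int_\cG(|\hat u_n'|^2-\hat u_n^6)=\lambda_n\mu+o(1)$ (with $\lambda_n\to\lambda$, transferred to $\hat u_n$ via the $H^1$-closeness of Lemma \ref{lem: lim pos in 0}), the strong $L^6$-convergence, and the identity $\int_\cG(|u'|^2-u^6)=\lambda m$ obtained by testing \eqref{eq limit} against $u$ (the boundary terms cancelling by the Kirchhoff conditions and vanishing at infinity by decay), I arrive at
\[
\int_\cG|w_n'|^2\longrightarrow\lambda(\mu-m).
\]
Since the left-hand side is nonnegative and $\lambda<0$, this forces $\mu\le m$; with weak lower semicontinuity $m\le\mu$ this gives $m=\mu$.

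Consequently $\int_\cG|w_n'|^2\to0$ and $\int_\cG w_n^2\to\mu-m=0$, so $\hat u_n\to u$ strongly in $H^1(\cG)$. In the limit $u\in H^1_\mu(\cG)$, $\int_K u^2\ge\eta$ so $u\in\cA$, and $\En(u,\cG)=\inf_{\cA}\En(\cdot,\cG)$. Finally, assumption \eqref{in min gen} forces $u\notin\partial\cA$: otherwise $u\in\cB$ and $\En(u,\cG)=\inf_{\cA}\En<\inf_{\cB}\En\le\En(u,\cG)$, a contradiction. Hence $u$ lies in the interior of $\cA$ and minimizes there, so it is a local minimizer of $\En(\cdot,\cG)$ on $H^1_\mu(\cG)$ and, carrying the multiplier $\lambda$, a constrained critical point. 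The main obstacle is precisely this mass-loss step: compactness on the non-compact part of $\cG$ is not automatic, and it is the interplay between the decreasing rearrangement (which buys strong $L^6$-convergence) and the sign of $\lambda$ that closes the gap.
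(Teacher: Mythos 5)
Your proof is correct and follows essentially the same route as the paper's: strong $L^6$ convergence from compactness of $K$ plus the compact embedding $H_*(\R^+)\hookrightarrow L^6(\R^+)$ on the half-lines, negativity of the Lagrange multiplier, and the Palais--Smale relation combined with the equation for $u$ tested against the difference $\hat u_n-u$. The only cosmetic differences are that the paper proves $\lambda<0$ by a concavity/positivity contradiction rather than your first-integral computation on a half-line, and that it concludes in one stroke from the identity $\int_\cG |(\tilde u_n-u)'|^2-\lambda\int_\cG|\tilde u_n-u|^2=o(1)$ (a sum of two nonnegative terms, hence an equivalent squared $H^1$-norm), which is algebraically the same as your Brezis--Lieb--type mass-loss dichotomy.
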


\begin{proof}
Let $\{\tilde{u}_n\}$ be the minimizing Palais-Smale sequence associated with $\{\hat{u}_n\}$, given by Lemma \ref{lem:min-PS}. By Lemma \ref{lem: lim pos in 0}, we can suppose that the two sequences converge to a limit $u \in H^1(\cG)$ (weakly in $H^1(\cG)$, and uniformly on compact sets of $\cG$), with $u$ satisfying \eqref{eq limit} and $u>0$ on $\cG$. \\
\textbf{Step 1)} Up to a subsequence, the limit is also strong in $L^6(\cG)$. Indeed, for each $i$ we recall that $\{\hat u_{i,n}\}$ is a bounded sequence in $H_*(\R^+)$, and hence, by compact embedding, $\hat u_{i,n} \to u_i$ strongly in $L^6(\R^+)$. Moreover, since $K$ is compact, we have also that $\hat v_n \to v$ strongly in $L^6(K)$, by local uniform convergence. \\
\textbf{Step 2)} The Lagrange multiplier $\lambda$ in \eqref{eq limit} is negative. Indeed, if by contradiction $\lambda \ge 0$, then $u_i$ would be a $C^2$ function on $(0,+\infty)$, concave, strictly positive (by the maximum principle), tending to $0$ as $x \to \infty$, which is not possible. \\ 
\textbf{Step 3)} $\tilde{u}_n \to u$ strongly in $H^1(\cG)$. Being $\tilde{u}_n$ a bounded Palais-Smale sequence, and recalling that $\lambda_n \to \lambda$, for any $\varphi \in H^1(\cG)$ we have that
\[
d\En(\tilde{u}_n, \cG)[\varphi] - \lambda \int_{\cG} \tilde{u}_n \varphi = o(1) \| \varphi\|_{H^1}
\]
as $n \to \infty$. Moreover
\[
d\En(u, \cG)[\varphi] - \lambda \int_{\cG} u \varphi = 0.
\]
Choosing $\varphi = \tilde{u}_n- u$ and subtracting, we deduce that
\[
\big(d\En(\tilde{u}_n, \cG) - d\En(u, \cG)\big)[\tilde{u}_n - u] - \lambda \int_{\cG} |\tilde{u}_n - u|^2 =o(1).
\]
But, having proved that $\tilde{u}_n \to u$ strongly in $L^6(\cG)$, the above equality reads
\[
\int_{\cG} |(\tilde{u}_n- u)'|^2 - \lambda |\tilde{u}_n- u|^2 = o(1),
\]
and, since $\lambda<0$, the left hand side is the square of a norm, equivalent to the standard one, in $H^1(\cG)$. Thus, we proved that $\tilde{u}_n \to u$ strongly in $H^1(\cG)$, and the thesis follows.
\end{proof}

\section{Proof of Theorem \ref{thm:main_intro}}\label{sec:proofmain}

Recalling the classification provided by \cite{AST2} and Proposition \ref{prop:removing}, we have that if either $\Gcal$ has exactly one half-line and no terminal point, or $\Gcal$ has no tips, no cycle-covering and at least $2$ half-lines, and $\mu_\Gcal<\mu_\R$, then $\cG$ can not fulfill the assumptions of Theorem \ref{thm:main_intro}. On the contrary, if $\Gcal$ is as in such theorem, only two possibilities are available, namely:
\begin{itemize}
\item $\mu_\Gcal = \mu_\Gcal\setminus\ell = \mu_{\R^+}$, for every open half-line
$\ell\subset\Gcal$. This is possible only if $\Gcal$ has at least two half-lines and a terminal point (and hence any $\Gcal\setminus\ell$ has at least a half-line and a terminal point, too). This case is treated in Section
\ref{subsec:with_tip}, see Proposition \ref{thm: tip gen} ahead.
\item  $\mu_\Gcal = \mu_\Gcal\setminus\ell = \mu_{\R}$, for every open half-line
$\ell\subset\Gcal$. This case is possible with different combinations, as explained in the introduction. In any case, by assumption, the compact core of $\Gcal$ is not empty. This case is treated in Section
\ref{subsec:no_tip}, see Proposition \ref{thm: notip gen} ahead.
\end{itemize}

\subsection{Non-compact graphs with a terminal point}\label{subsec:with_tip}

In this section we work under the following assumptions:
\begin{itemize}
\item[($g_1$)] $\mathcal{G}$ has a terminal point.
\item[($g_2$)] The non-compact part of $\cG$ consists in a finite number $m \ge 2$ of half-lines $\ell_1,\dots, \ell_m$.
\end{itemize}
The prototype of this class of graphs is the graph with the pendant considered in the previous sections. Notice that for any $\cG$ satisfying ($g_1$) and ($g_2)$ we have $\mu_{\cG}= \mu_{\R^+}$, and for the ground state energy level $\cE_{\cG}(\mu)$ we have:
\begin{itemize}
\item[(i)] If $\mu < \mu_{\R^+}$, then $\cE_{\cG}(\mu) = 0$, and is not attained.
\item[(ii)] If $\mu > \mu_{\R^+}$, then $\cE_{\cG}(\mu) =-\infty$,
\end{itemize}
see \cite[Corollary 2.5]{AST2}.

\begin{proposition}\label{thm: tip gen}
Let $\cG$ be a non-compact metric graph satisfying ($g_1$) and ($g_2$). Then there exists $\bar \mu \in (0, \mu_{\R^+})$ such that for every $\mu \in (\bar \mu, \mu_{\R^+})$ the functional $\En(\cdot, \cG)$ has a critical point on $H^1_{\mu}(\cG)$, which is a local minimizer.
\end{proposition}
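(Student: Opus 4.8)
The plan is to apply the abstract variational principle from Proposition \ref{prop: comp}, so the entire task reduces to choosing parameters $\eta, \delta$ (for a fixed $\mu$ slightly below $\mu_{\R^+}$) so that its hypotheses \eqref{hp dati} and \eqref{in min gen} are verified. First I would fix the key quantity $\mu_m < \mu_{\R^+}$, the minimum value of the mass function $\mu(z)$ for the explicit pendant solutions constructed in Section \ref{soltip}. The intuition driving the whole argument is that on the model graph, for masses $\mu\in(\mu_m,\mu_{\R^+})$, there is a local minimizer which places slightly more than $\mu_{\R^+}$ of mass near the compact core and the remainder escaping to infinity along the half-lines; the role of the constraint $\int_K u^2 \ge \eta$ is precisely to trap a locally minimizing sequence away from the trivial escape-to-infinity behaviour that prevents the global minimum from being attained.

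The heart of the proof is verifying the strict inequality \eqref{in min gen}, namely $\inf_\cA \En < \inf_\cB \En$, together with the lower bound $\inf_\cA \En > -\infty$. The lower bound is immediate from \eqref{coerc1} since $\mu < \mu_{\cG} = \mu_{\R^+}$. For the strict inequality I would proceed as follows. On one hand, evaluating $\En$ at the explicit critical point $u_z$ from Section \ref{soltip} (suitably transplanted onto $\cG$, concentrating the compact part of the profile near the terminal point and sending the two soliton tails down two of the half-lines) shows that $\inf_\cA \En$ is at most the energy of such a configuration, which one computes to be strictly negative for an appropriate choice of $z$. On the other hand, I must show that configurations in the thin boundary shell $\cB$, where $\int_K u^2 \le (1+\delta)\eta$ is small, cannot achieve an energy this low. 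The point is that for $u\in\cB$, only a controlled amount of mass $(1+\delta)\eta$ sits on the compact core, so the remaining mass $\ge \mu-(1+\delta)\eta$ lives on the half-lines; choosing $\eta$ (and then $\delta$ small) so that the half-line mass on each component stays below $\mu_{\R^+}$ forces $\En(\cdot,\ell_i)\ge 0$ on each half-line by \eqref{coerc1}, and a quantitative estimate shows the core contribution cannot drive the total energy as low as the test configuration. Combining these two estimates gives the strict gap.

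The main obstacle I expect is making the gap estimate \emph{quantitative and robust} with respect to the unknown geometry of the compact core of $\cG$. For the model graph everything is explicit, but for a general $\cG$ satisfying ($g_1$)–($g_2$) I can only rely on the terminal point (tip) and the availability of at least two half-lines. The natural strategy is to use the tip: near a terminal point $\cG$ looks like a half-line with a free end, so I can build a test function supported on a single half-edge emanating from the tip that mimics the pendant solution and has strictly negative energy while carrying mass slightly above $\mu_{\R^+}$ concentrated there, then distribute the deficit $\mu-$(core mass) as small decaying tails on the half-lines. The delicate balance is that $\eta$ must be chosen large enough that the test configuration (whose core mass exceeds $\mu_{\R^+}$) lies in $\cA\setminus\cB$, yet small enough that \eqref{hp dati} holds, i.e. $\mu-\eta<\mu_{\R^+}$ and $(1+\delta)\eta<\mu$; since the test mass on the core must exceed $\mu_{\R^+}$ while $\eta$ itself may be taken around $\mu_{\R^+}$, compatibility with $\mu<\mu_{\R^+}$ requires care, and this is presumably what forces $\mu$ to be taken in a band $(\bar\mu,\mu_{\R^+})$ rather than all the way down to $0$.

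Finally, once \eqref{in min gen} is secured, Proposition \ref{prop: comp} directly yields a critical point of $\En(\cdot,\cG)$ on $H^1_\mu(\cG)$ realized as a local minimizer in $\cA$; since the minimizer sits in the interior of $\cA$ (by the strict gap it does not touch $\partial\cA$), it is a genuine constrained local minimizer of $\En$ on $H^1_\mu(\cG)$, which is the assertion of Proposition \ref{thm: tip gen}. The passage from ``local minimizer in $\cA$'' to ``local minimizer in $H^1_\mu(\cG)$'' is exactly what the separation $\inf_\cA\En<\inf_\cB\En$ guarantees, since any $H^1$-small perturbation either stays in $\cA$ or crosses into the higher-energy shell $\cB$.
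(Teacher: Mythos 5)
Your overall scaffolding is right---reduce everything to Proposition \ref{prop: comp} and verify \eqref{hp dati} and \eqref{in min gen}---and your lower bound $\inf_{\cA}\En>-\infty$ via \eqref{coerc1} is correct. But the central step of your gap argument is wrong: you claim a test configuration in $\cA$ with \emph{strictly negative} energy. This is impossible. Since $\mu<\mu_{\R^+}=\mu_{\cG}$, inequality \eqref{coerc1} gives $\En(u,\cG)\ge 0$ for \emph{every} $u\in H^1_\mu(\cG)$ (equivalently, by \eqref{eq:gsmu}, $\Ecal_\Gcal(\mu)=0$ for $\mu\le\mu_\Gcal$). So $\inf_{\cA}\En\ge 0$, and no transplant of the solutions $u_z$ of Section \ref{soltip}, nor any other competitor with mass below $\mu_{\R^+}$, can have negative energy. (Relatedly, your heuristic that the minimizer ``places slightly more than $\mu_{\R^+}$ of mass near the compact core'' cannot hold: the total mass is below $\mu_{\R^+}$; in the model case the core mass approaches $\mu_{\R^+}$ from \emph{below}.) With this claim removed, your proposed separation ``negative inside $\cA$ versus non-negative on the shell $\cB$'' collapses, and your sketch for $\cB$ (half-line energies $\ge 0$ plus an unspecified core estimate) proves at best $\inf_{\cB}\En\ge 0$, which yields no strict inequality in \eqref{in min gen}.

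The mechanism in the paper is the opposite of yours and is genuinely needed. First one proves a \emph{uniform positive} lower bound $\inf_{\cB_\eta^\mu}\En\ge C_4>0$, with $C_4$ independent of $\mu$: Lemma \ref{lem:eq-co} establishes equicoercivity $\En(u,\cG)\ge C_1\|u'\|_{L^2(\cG)}^2$ on $\cB_\eta^\mu$ by splitting $\cG$ into one half-line $\ell_m$ carrying mass at least $\delta\eta/m$ and the complementary graph $\cF=\cG\setminus\ell_m$, which---precisely because the tip survives under ($g_1$), ($g_2$)---still has critical mass $\mu_{\R^+}$, so both pieces carry mass strictly below their critical masses by a quantified margin; Lemma \ref{lem:l-b} then upgrades this to $\En\ge C_4>0$ by a compactness/contradiction argument (if energies vanish then gradients vanish, the limit is a constant, hence $0$ on a non-compact graph, contradicting $\int_K u^2\ge\eta$). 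Second, Lemma \ref{lem:inf-a} shows $\inf_{\cA_\eta^\mu}\En<C_4$ for $\mu$ close to $\mu_{\R^+}$, not by a negative competitor but by a truncated half-soliton $w_\lambda=(\phi_\lambda-\phi_\lambda(1))^+$ supported on the terminal edge, whose mass tends to $\mu_{\R^+}$ and whose (non-negative) energy tends to $0$ as $\lambda\to+\infty$. The strict gap in \eqref{in min gen} is thus ``almost zero inside $\cA$ versus uniformly positive on $\cB$.'' Your proposal is missing exactly this uniform positive bound on the shell, which is the heart of the proof, and it cannot be repaired while keeping the negative-energy comparison as its pivot.
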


The proof of the proposition will take the rest of the section. In view of Proposition \ref{prop: comp}, we shall conveniently introduce two sets $\cA$ and $\cB$ as in \eqref{A B gen}, and prove that for such sets \eqref{in min gen} holds. Precisely, for $\eta, \mu, \delta>0$ such that
\begin{equation}\label{hp dati tip}
(1+2\delta)\eta<\mu_{\R^+}, \quad \text{and} \quad  \mu \in [(1+2\delta)\eta,\mu_{\R^+}],
\end{equation}
we introduce
\[
\begin{split}
\cA_\eta^\mu & := \left\{u \in H^1_\mu(\cG)\left| \ \int_K u^2 \ge \eta \right.  \right\} \\
\cB_\eta^\mu & := \left\{u \in H^1_\mu(\cG)\left| \ \eta \le \int_K u^2 \le (1+\delta) \eta \right.\right\}.
\end{split}
\]
From now on, we shall always suppose that \eqref{hp dati tip} is in force, and observe that assumption \eqref{hp dati} in Proposition \ref{prop: comp} is trivially satisfied with this choice.

\begin{lemma}[Equicoercivity in $\cB_\eta^\mu$]\label{lem:eq-co}
There exists $C_1=C_1(\delta,\eta, m)>0$ (independent of $\mu$) such that
\[
\En(u,\cG) \ge C_1\|u'\|_{L^2(\cG)}^2
\]
for every $u \in \cB_\eta^\mu$, for every $\mu \in [(1+2\delta)\eta,\mu_{\R^+}]$.
\end{lemma}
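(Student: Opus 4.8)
The key observation is that membership in $\cB_\eta^\mu$ forces the mass to split so that, even as $\mu\uparrow\mu_{\R^+}$, every piece of $\cG$ carries a mass bounded away from the critical value $\mu_{\R^+}$ by a gap depending only on $\eta,\delta,m$. Write $u=(u_1,\dots,u_m,v)$ with $v=u|_K$, and set $m_i=\int_{\ell_i}u_i^2$, $m_K=\int_K v^2$. Since $\mu\le\mu_{\R^+}$ and $m_K\ge\eta$, the total mass on the half-lines satisfies $\sum_i m_i=\mu-m_K\le\mu_{\R^+}-\eta$; in particular each $m_i\le\mu_{\R^+}-\eta$, while the \emph{smallest} half-line mass obeys $m_{i_0}:=\min_i m_i\le(\mu_{\R^+}-\eta)/m$. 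The plan is to apply the Gagliardo--Nirenberg inequality, in the form \eqref{coerc1}, piece by piece, after grouping the compact core with this smallest half-line.

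First I would isolate the subgraph $\cG_0:=K\cup\ell_{i_0}$. Because the terminal point of $\cG$ lies in $K$, the graph $\cG_0$ is a connected non-compact graph \emph{with a tip}, hence has critical mass $\mu_{\cG_0}=\mu_{\R^+}$ by the classification of \cite{AST2}, so \eqref{coerc1} holds on $\cG_0$ with $\mu_{\cG_0}=\mu_{\R^+}$. The crucial point is that the mass carried by $\cG_0$ is uniformly subcritical,
\[
\int_{\cG_0}u^2=m_K+m_{i_0}\le(1+\delta)\eta+\frac{\mu_{\R^+}-\eta}{m}=:S_m,
\]
and $S_m<\mu_{\R^+}$. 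Indeed $S_m<\mu_{\R^+}$ is equivalent to $\eta<\mu_{\R^+}\frac{m-1}{m(1+\delta)-1}$, which is implied by the hypothesis \eqref{hp dati tip} precisely because $m\ge2$ (the two bounds coincide when $m=2$, so the assumption $(1+2\delta)\eta<\mu_{\R^+}$ is exactly sharp there). Applying \eqref{coerc1} on $\cG_0$ therefore gives
\[
\En(u,\cG_0)\ge\tfrac12\bigl(1-(S_m/\mu_{\R^+})^2\bigr)\|u'\|_{L^2(\cG_0)}^2,
\]
with a coefficient that is positive and independent of $\mu$.

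On each remaining half-line $\ell_i\simeq\R^+$, $i\ne i_0$, the trace $u_i$ has mass $m_i\le\mu_{\R^+}-\eta$, so \eqref{coerc1} yields $\En(u_i,\ell_i)\ge\frac{c_0}{2}\|u_i'\|_{L^2(\ell_i)}^2$ with $c_0:=1-(1-\eta/\mu_{\R^+})^2>0$. Since both the energy and the Dirichlet integral are additive over the decomposition $\cG=\cG_0\cup\bigcup_{i\ne i_0}\ell_i$, summing the estimate on $\cG_0$ with these gives $\En(u,\cG)\ge C_1\|u'\|_{L^2(\cG)}^2$, where
\[
C_1:=\min\Bigl\{\tfrac12\bigl(1-(S_m/\mu_{\R^+})^2\bigr),\ \tfrac{c_0}{2}\Bigr\}>0
\]
depends only on $\delta,\eta,m$, as required.

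The step I expect to be the main obstacle is the grouping itself: the compact core must \emph{not} be estimated on its own, since on a compact graph the scale-invariant Gagliardo--Nirenberg inequality fails and constant functions already force $\En(\cdot,K)<0$. The remedy is to absorb $K$ into the tip-subgraph $\cG_0$, whose total mass stays below $\mu_{\R^+}$. Establishing the sharp threshold $S_m<\mu_{\R^+}$ --- which is exactly where both the hypothesis \eqref{hp dati tip} and the assumption $m\ge2$ enter, in a way that is borderline for $m=2$ --- is the heart of the argument; everything else is a direct application of \eqref{coerc1}.
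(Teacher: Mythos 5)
Your proposal is correct, but it uses a genuinely different decomposition from the paper's. The paper applies the pigeonhole principle in the opposite direction: it selects the half-line $\ell_m$ carrying the \emph{largest} mass, which by the upper bound $\int_K u^2\le(1+\delta)\eta$ and the hypothesis $\mu\ge(1+2\delta)\eta$ must carry at least $\frac1m\left(\mu-(1+\delta)\eta\right)\ge\delta\eta/m$, and then splits $\cG=\cF\cup\ell_m$, where $\cF=\cG\setminus\ell_m$ is the compact core together with \emph{all} the remaining half-lines. Since $\cF$ still has a tip, $\mu_{\cF}=\mu_{\R^+}$, the mass on $\cF$ is at most $\mu_{\R^+}-\delta\eta/m$ and the mass on $\ell_m$ is at most $\mu_{\R^+}-\eta$, after which \eqref{coerc1} is applied to the two pieces exactly as you do. You instead isolate the \emph{lightest} half-line, glue it to $K$, and treat the other $m-1$ half-lines individually; the subcritical gap for $K\cup\ell_{i_0}$ then comes from your inequality $S_m<\mu_{\R^+}$, which you correctly reduce to $(1+2\delta)\eta<\mu_{\R^+}$ together with $m\ge2$ (the two thresholds coinciding at $m=2$). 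Both routes rest on the same two ingredients --- the classification of \cite{AST2} giving critical mass $\mu_{\R^+}$ for any subgraph with a tip, and \eqref{coerc1} with mass gaps uniform in $\mu$ --- and both produce a constant depending only on $(\delta,\eta,m)$. The trade-offs are worth noting: your argument never uses the lower bound $\mu\ge(1+2\delta)\eta$, but it is borderline sharp at $m=2$ and is tied to the tip structure, since in the tipless setting of Lemma \ref{lem:eq-co'} the subgraph $K\cup\ell_{i_0}$ may acquire terminal points (hence critical mass only $\mu_{\R^+}$), while there one takes $\eta>\mu_{\R^+}$, so your grouping would carry supercritical mass and \eqref{coerc1} would give nothing. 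The paper's choice --- remove one heavy half-line, keep $K$ attached to everything else --- transfers verbatim to that case because it mirrors assumption 2 of Theorem \ref{thm:main_intro}, whose hypothesis concerns precisely $\mu_{\cG\setminus\ell}$. Finally, your observation that $K$ alone cannot be estimated (constants already give $\En(\cdot,K)<0$) is indeed the reason both proofs group $K$ with at least one half-line.
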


\begin{proof}
If $u \in \cB_\eta^\mu$, then
\[
\sum_{i=1}^m \int_{\ell_i} u^2 = \mu - \int_{K} u^2 \in \left[\mu - (1+\delta) \eta, \mu - \eta\right].
\]
Therefore, there exists an index $\bar i \in \{1,\dots, m\}$, say $\bar i=m$, such that
\begin{equation}\label{3pag3}
\int_{\ell_m} u^2 \ge \frac1m\left( \mu - (1+\delta) \eta\right) \ge \frac{\delta \eta}{m}.
\end{equation}
Let $\mathcal{F}$ be the metric graph obtained removing the half-line $\ell_m$ and the corresponding vertex at infinity from $\cG$. By ($g_1$) and ($g_2$), this is a non-compact connected graph with at least one half-line and a terminal point, and hence $\mu_{\mathcal{F}} = \mu_{\R^+}$, by \cite[Theorem 3.1]{AST2}. In particular, \eqref{coerc1} gives that
\[
\En(u, \mathcal{F}) \ge \frac12 \left(1- \left(\frac{\int_{\mathcal{F}} u^2}{\mu_{\R^+}}\right)^2\right) \|u'\|_{L^2(\cF)}^2.
\]
Notice that, by \eqref{3pag3},
\[
\int_{\mathcal{F}} u^2 = \int_{\mathcal{G}} u^2 - \int_{\ell_m} u^2 \le  \mu-\frac{\delta\eta}{m} \le \mu_{\R^+} -\frac{\delta \eta}{m} ,
\]
which is strictly smaller than $\mu_{\R^+}$. Thus, it follows that for a constant $C_2(\delta,\eta,m)>0$
\begin{equation}\label{4pag3}
\En(u, \mathcal{F}) \ge C_2(\delta, \eta, m)  \|u'\|_{L^2(\cF)}^2.
\end{equation}
On the other hand, always by \eqref{coerc1}, on the half-line $\ell_m$
\[
\begin{split}
\En(u, \ell_m) &\ge  \frac12 \left(1- \left(\frac{\int_{\ell_m} u^2}{\mu_{\R^+}}\right)^2\right) \|u'\|_{L^2(\ell_m)}^2,
\end{split}
\]
and
\[
\int_{\ell_m} u^2 \le \mu - \int_{K} u^2 \le \mu-\eta \le \mu_{\R^+}-\eta,
\]
so that
\begin{equation}\label{5pag3}
\En(u, \ell_m) \ge C_3(\eta) \|u'\|_{L^2(\ell_m)}^2
\end{equation}
for some $C_3(\eta)>0$. Comparing \eqref{4pag3} and \eqref{5pag3}, we finally deduce that
\[
\En(u,\cG) = \En(u, \mathcal{F}) + \En(u, \ell_m) \ge \min\{C_2(\delta,\eta,m), C_3(\eta)\} \|u'\|_{L^2(\cG)}^2,
\]
which is the desired result with $C_1(\delta,\eta,m) = \min\{C_2(\delta, \eta, m), C_3(\eta)\}$.
\end{proof}

Using the equicoercivity with respect to $\mu$, it is not difficult to obtain the following uniform lower bounds.

\begin{lemma}[Uniform lower bound in $\cB_\eta^\mu$]\label{lem:l-b}
There exists $C_4=C_4(\delta,\eta,m)>0$ such that
 \[
\En(u,\cG) \ge C_4
\]
for every $u \in \cB_\eta^\mu$, for every $\mu \in [(1+2\delta)\eta,\mu_{\R^+}]$.
 \end{lemma}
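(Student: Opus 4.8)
The plan is to derive a uniform lower bound from the equicoercivity result of Lemma~\ref{lem:eq-co} by showing that the $H^1$-norm of any $u \in \cB_\eta^\mu$ cannot be too small. Lemma~\ref{lem:eq-co} already gives $\En(u,\cG) \ge C_1 \|u'\|_{L^2(\cG)}^2$ with $C_1$ independent of $\mu$, so it suffices to produce a constant $c>0$, again independent of $\mu$, such that $\|u'\|_{L^2(\cG)}^2 \ge c$ for every $u \in \cB_\eta^\mu$. Setting $C_4 := C_1 c$ then closes the argument.

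The key observation is that on $\cB_\eta^\mu$ the mass on the compact core $K$ is pinned into the band $\eta \le \int_K u^2 \le (1+\delta)\eta$. In particular, a fixed positive amount of mass, namely at least $\eta$, is confined to the bounded set $K$. If the derivative of $u$ were allowed to be arbitrarily small, then $u$ would be nearly constant; but a function that carries mass $\ge \eta$ on the compact core and also satisfies the global constraint $\int_\cG u^2 = \mu \le \mu_{\R^+}$ while being continuous across the vertices into the half-lines cannot be too flat. First I would quantify this: since $K$ is a finite union of bounded edges, the restriction $v = u|_K$ lives in $H^1(K)$ with $\|v\|_{L^2(K)}^2 \ge \eta$, while on each half-line $\ell_i$ the function $u_i$ must decay (as $u_i \in H^1(\R^+)$, so $u_i(x)\to 0$). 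By continuity at the vertices, the boundary values of $v$ at the endpoints incident to half-lines equal the starting values $u_i(0)$ of half-line functions of bounded $L^2$ mass, and a Gagliardo--Nirenberg or trace estimate on $\R^+$ bounds these boundary values in terms of $\|u_i'\|_{L^2(\ell_i)}$ and $\|u_i\|_{L^2(\ell_i)}$. This lets me control how quickly $v$ can drop from its interior values.

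The cleanest route is probably a contradiction/compactness argument: suppose no such $C_4$ exists, so there is a sequence $u_n \in \cB_{\eta}^{\mu_n}$ (with $\mu_n \in [(1+2\delta)\eta,\mu_{\R^+}]$) with $\En(u_n,\cG) \to 0$, hence $\|u_n'\|_{L^2(\cG)} \to 0$ by Lemma~\ref{lem:eq-co}. Passing to a subsequence with $\mu_n \to \mu_* \in [(1+2\delta)\eta,\mu_{\R^+}]$, the sequence $u_n$ is bounded in $H^1(\cG)$ by \eqref{coerc1}, so $u_n \weak u_*$ weakly and locally uniformly. Since $\|u_n'\|_{L^2} \to 0$, the limit $u_*$ has $u_*' \equiv 0$, i.e. $u_*$ is locally constant on each connected piece, hence constant on all of $\cG$ by connectedness and continuity. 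But a nonzero constant has infinite mass on any half-line, forcing $u_* \equiv 0$; this contradicts $\int_K u_*^2 = \lim_n \int_K u_n^2 \ge \eta > 0$, where the convergence of the core masses follows from local uniform convergence on the compact set $K$.

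The main obstacle is ensuring the constant $C_4$ is genuinely \emph{independent of $\mu$}, which is exactly why the contradiction argument is run over sequences with varying $\mu_n$ rather than for a fixed $\mu$. The compactness on $K$ (a fixed compact set) and the vanishing-derivative limit are what make the bound uniform: all the $\mu$-dependence is absorbed into the already-uniform constant $C_1$ of Lemma~\ref{lem:eq-co} and into the harmless range $[(1+2\delta)\eta,\mu_{\R^+}]$ over which $\mu_n$ ranges. I would double-check that weak $H^1$ convergence together with $\|u_n'\|_{L^2}\to0$ indeed forces the limit to be constant (this uses lower semicontinuity of the $L^2$-norm of the derivative) and that local uniform convergence on $K$ legitimately passes the core-mass lower bound to the limit.
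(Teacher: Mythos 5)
Your contradiction argument is exactly the paper's proof: assume $E(u_n,\cG)\to 0$ along $u_n\in\cB_\eta^{\mu_n}$, use Lemma \ref{lem:eq-co} to get $\|u_n'\|_{L^2(\cG)}\to 0$, pass to a weak/locally uniform limit which must be constant and hence zero on the non-compact graph, and contradict $\int_K u^2\ge\eta$ via uniform convergence on the compact core. The preliminary musings about trace estimates are unnecessary (and unused), and boundedness in $H^1$ follows from Lemma \ref{lem:eq-co} together with the mass constraint rather than from \eqref{coerc1} (which is vacuous at $\mu=\mu_{\R^+}=\mu_\cG$), but the core argument is correct and identical to the paper's.
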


 \begin{proof}
Suppose by contradiction that there exist sequences $\{\mu_n\} \subset [(1+2\delta)\eta,\mu_{\R^+}]$ and $u_n \in \cB_\eta^{\mu_n}$ such that $\En(u_n,\cG) \to 0$ as $n \to \infty$.
By Lemma \ref{lem:eq-co}, we infer that $\{u_n\}$ is bounded in $H^1(\cG)$, and moreover $\|u_n'\|_{L^2(\cG)} \to 0$ as $n \to \infty$; thus, up to a subsequence, we have that $u_n \weak u$ weakly in $H^1(\cG)$, and $u_n \to u$ locally uniformly on $\cG$, and by weak lower semi-continuity
 \begin{equation}\label{sn=0}
 \|u'\|_{L^2(\cG)}^2 \le \liminf_{n \to \infty} \|u_n'\|_{L^2(\cG)}^2 = 0;
 \end{equation}
this implies that $u$ is constant on $\cG$, and in fact, since $u \in H^1(\cG)$ and $\cG$ is non-compact, we have that necessarily $u \equiv 0$. However, by local uniform convergence
 \[
 \int_{K} u^2 = \lim_{n \to \infty}  \int_{K} u_n^2 \ge \eta >0,
 \]
a contradiction.
 \end{proof}

\begin{lemma}[Infimum in $\cA_\eta^\mu$]\label{lem:inf-a}
There exists $\bar \mu \in ((1+2\delta)\eta, \mu_{\R^+})$ such that
\[
\inf_{u \in \cA_\eta^\mu} \ \En(u,\cG) < C_4
\]
for every $\mu \in (\bar \mu, \mu_{\R^+})$.
\end{lemma}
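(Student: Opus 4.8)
The plan is to exhibit, for each $\mu$ sufficiently close to $\mu_{\R^+}$ from below, an explicit test function $u_\mu \in \cA_\eta^\mu$ whose energy tends to $0$ as $\mu \to \mu_{\R^+}^-$. Since $C_4>0$ is a fixed constant (produced by Lemma \ref{lem:l-b}), this forces $\inf_{\cA_\eta^\mu}\En < C_4$ for every $\mu$ in a suitable left-neighborhood of $\mu_{\R^+}$, which is exactly the assertion. The guiding idea, suggested by the model analysis of Section \ref{soltip}, is that a half-soliton carries mass $\mu_{\R^+}$ and energy $0$, so that by concentrating it near the terminal point one places almost all of this mass on the compact core $K$.

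Concretely, let $p$ be a terminal point of $\cG$ (which exists by $(g_1)$) and let $e \cong [0,\ell_0]$ be the bounded edge incident to $p$, with $p$ identified with $x=0$; since $p$ has degree $1$, the Kirchhoff condition there reduces to the Neumann condition. I would set $u_\Lambda := \phi_\Lambda$ on $e$: as $\phi_\Lambda$ is even we have $\phi_\Lambda'(0)=0$, so the Neumann condition at the tip holds, and $\En(\phi_\Lambda,\R^+)=0$. Writing $b_\Lambda := \phi_\Lambda(\ell_0) \to 0$ as $\Lambda \to \infty$, I would extend $u_\Lambda$ to the rest of $\cG$ by a ``halo'' of vanishing amplitude, namely the constant $b_\Lambda$ on $K\setminus e$ and an exponentially decaying tail $b_\Lambda e^{-x}$ on each half-line. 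Since all vertex values equal $b_\Lambda$, this makes $u_\Lambda$ continuous across every vertex, hence $u_\Lambda \in H^1(\cG)$.

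All three relevant quantities behave well as $\Lambda \to \infty$. First, $\En(\phi_\Lambda, e) = -\En(\phi_\Lambda, [\ell_0,+\infty))$, and by \eqref{coerc1} on the half-line $[\ell_0,+\infty)$, where $\phi_\Lambda$ carries mass $<\mu_{\R^+}$, this tail energy is non-negative and tends to $0$; thus $\En(\phi_\Lambda, e)\le 0$ and $\to 0$. The halo contributes only $O(b_\Lambda^2)$ to both mass and energy, hence $o(1)$. Therefore $\En(u_\Lambda,\cG)\to 0$, the total mass $m(\Lambda):=\int_\cG u_\Lambda^2 \to \mu_{\R^+}^-$, and the core mass $\int_K u_\Lambda^2 \to \mu_{\R^+} > \eta$ (recall $\eta<\mu_{\R^+}$ by \eqref{hp dati tip}), so $u_\Lambda \in \cA_\eta^{m(\Lambda)}$ for $\Lambda$ large. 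Since $m(\cdot)$ is continuous with $m(\Lambda)\to\mu_{\R^+}$ and takes smaller values for moderate $\Lambda$, its range contains an interval $(\bar\mu,\mu_{\R^+})$; for each such $\mu$ I would pick $\Lambda(\mu)$ with $m(\Lambda(\mu))=\mu$ (so $\Lambda(\mu)\to\infty$ as $\mu\to\mu_{\R^+}^-$) and set $u_\mu:=u_{\Lambda(\mu)}$, obtaining $u_\mu\in\cA_\eta^\mu$ with $\En(u_\mu,\cG)\to 0$. Enlarging $\bar\mu$ if necessary so that $\En(u_\mu,\cG)<C_4$ for $\mu\in(\bar\mu,\mu_{\R^+})$ then completes the proof.

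The main obstacle is technical rather than conceptual: one must check that the halo can be attached respecting continuity at all vertices \emph{simultaneously} while contributing only $o(1)$ to mass and energy, and that the exact constraint $\int_\cG u_\mu^2=\mu$ can be met --- this is where the continuity of $\Lambda\mapsto m(\Lambda)$ and an intermediate-value selection enter. All the estimates reduce to the exponential decay of $\phi_\Lambda$ away from the tip together with \eqref{coerc1}, so no genuinely hard analysis is involved; the crux is simply the observation that a concentrated half-soliton near the tip has nearly vanishing energy and deposits nearly all of its mass on $K$.
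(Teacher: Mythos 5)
Your overall strategy --- concentrating a half-soliton at the tip, matching the mass by an intermediate-value argument, and concluding because the energy tends to zero --- is the same as the paper's, but your specific competitor has a genuine gap: the claim that the total mass $m(\Lambda)=\int_\cG u_\Lambda^2$ tends to $\mu_{\R^+}$ \emph{from below} is false, and everything after it breaks. The issue is a comparison of two exponentially small quantities. Since $\phi^2(y)=\cosh^{-1}(2y/\sqrt 3)$ decays at the fixed rate $e^{-2y/\sqrt 3}$, the mass that $\phi_\Lambda$ fails to deposit on $e$ is
\[
\mu_{\R^+}-\int_e \phi_\Lambda^2=\int_{\Lambda\ell_0}^{+\infty}\phi^2(y)\,dy \,\sim\, \sqrt3\, e^{-2\Lambda \ell_0/\sqrt 3},
\]
with no factor of $\Lambda$, whereas your halo carries mass
\[
b_\Lambda^2\left(|K\setminus e|+\frac m2\right), \qquad b_\Lambda^2=\Lambda\,\phi^2(\Lambda\ell_0)\,\sim\, 2\Lambda\, e^{-2\Lambda\ell_0/\sqrt 3},
\]
which is larger by a factor of order $\Lambda$ (the geometric prefactor is at least $m/2\ge 1$ by ($g_2$)). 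Hence $m(\Lambda)>\mu_{\R^+}$ for all large $\Lambda$: the mass overshoots and approaches $\mu_{\R^+}$ from \emph{above}. Consequently, for $\mu<\mu_{\R^+}$ every solution of $m(\Lambda)=\mu$ lies in a bounded interval $(0,\Lambda_1)$, so your selection $\Lambda(\mu)$ does \emph{not} tend to $+\infty$ as $\mu\to\mu_{\R^+}^-$; it accumulates on the finite crossing set $\{m=\mu_{\R^+}\}$, where $\En(u_\Lambda,\cG)$ is a fixed positive quantity that need not be smaller than $C_4$ (which may be very small). The closing step ``enlarging $\bar\mu$ if necessary so that $\En(u_\mu,\cG)<C_4$'' therefore has nothing to support it.

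The paper's proof avoids any halo: it takes the truncated profile $w_\lambda=(\phi_\lambda-\phi_\lambda(1))^+$ on the terminal edge and extends it by \emph{zero} on $\cG\setminus e$; continuity at the interior vertex is then automatic, the mass $m_\lambda=\int_0^1 w_\lambda^2$ is strictly increasing to $\mu_{\R^+}$ (so it genuinely approaches from below, with $\lambda(\mu)\to\infty$ as $\mu\to\mu_{\R^+}^-$), and a mean-value estimate shows the truncation perturbs the $L^6$ term only by $O(\lambda^2\phi(\lambda))=o(1)$, whence $\En(w_{\lambda(\mu)},\cG)\to0$. If you insist on the untruncated soliton, the overshoot must be compensated: for instance, multiply $u_\Lambda$ by $c_\Lambda=\sqrt{\mu/m(\Lambda)}$ and let $\Lambda=\Lambda(\mu)\to\infty$ slowly enough (say $\Lambda(\mu)=(\mu_{\R^+}-\mu)^{-1/4}$) that the resulting error $\bigl(1-c_\Lambda^2\bigr)\|u_\Lambda\|_{L^6(\cG)}^6=O\bigl((\mu_{\R^+}-\mu)\Lambda^2\bigr)$ still vanishes. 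Some device of this kind --- truncation, rescaling, or a halo whose mass is $o\bigl(e^{-2\Lambda\ell_0/\sqrt3}\bigr)$ --- is indispensable.
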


\begin{proof}
Let $e$ be the edge of $\cG$ containing the terminal point. We identify $e$ with $[0,d]$, where the coordinate $0$ is taken in the terminal point. Also, in order to simplify some expressions and without loss of generality, we suppose that $d=1$. We show that, for any $\eps>0$ there exists $\mu_\eps \in ((1+2\delta)\eta, \mu_{\R^+})$ such that
\[
\mu \in (\mu_\eps, \mu_{\R^+}) \quad \implies \quad \exists w_\mu \in \cA_\eta^\mu \quad \text{with} \quad \En(w_\mu,\cG) <\eps.
\]
This in particular gives the thesis for $\eps = C_4$. For the exact choice of $w_\mu$, we consider the half-soliton $\phi$ (with $\phi$ defined in \eqref{defsol}), its scaling $\phi_\lambda(x):= \sqrt{\lambda} \phi(\lambda x)$, with $\lambda>0$, and we let
\[
w_\lambda(x):= (\phi_\lambda(x)- \phi_\lambda(1))^+.
\]
It is clear that $w_\lambda \in H^1(0,1)$, with $w_\lambda > 0$ on $[0,1)$, and $w_\lambda(1) = 0$. By monotone and dominated convergence, it is not difficult to check that the quantity
\[
m_\lambda:= \int_0^1 w_\lambda^2 =  \int_0^\infty (\phi(y) - \phi(\lambda))^2 \chi_{[0,\lambda]}(y)\,dy
\]
is continuous and monotone (strictly) increasing with respect to $\lambda$, and has limits
\[
\lim_{\lambda \to 0^+} m_\lambda = 0, \quad \text{and} \quad \lim_{\lambda \to +\infty} m_\lambda = \int_0^\infty \phi^2 = \mu_{\R^+}.
\]
Thus, for every $\mu \in (0, \mu_{\R^+})$ there exists a unique $\lambda(\mu)> 0$ such that $m_{\lambda(\mu)} = \mu$, and we define a function $w_\mu$ on the whole graph $\cG$ by setting
\[
w_{\mu} := \begin{cases} 0 & \text{on } \cG \setminus e \\ w_{\lambda(\mu)} & \text{on } e=[0,1].
\end{cases}
\]
It remains to check that $\En(w_\mu, \cG) = \En(w_{\lambda(\mu)},(0,1))$ can be made arbitrarily small as $\mu \to \mu_{\R^+}$, that is, as $\lambda \to +\infty$. We have
\[
\begin{split}
\En(w_{\lambda},(0,1)) & = \int_0^1 \frac12 (w_{\lambda}')^2 - \frac16 \int_0^1 w_\lambda^6 \\
& = \lambda^2 \left[ \int_0^\lambda \frac12 (\phi'(y))^2 -\frac16 (\phi(y)-\phi(\lambda))^6\right]dy \\
& \le \lambda^2 \left[ \int_0^\lambda \frac12 (\phi'(y))^2 -\frac16 \phi^6(y)\right]dy + \lambda^2 \phi(\lambda) \int_0^\lambda \phi^5(y)\,dy,
\end{split}
\]
where we used the fact that, for every $y  \in (0,\ \lambda)$, there exists $\tau_y \in (0,1)$ such that
\[
|(\phi(y)-\phi(\lambda))^6 - \phi(y)^6| = 6 |(\phi(y)-\tau_y \phi(\lambda))^5|\phi(\lambda) \le 6 \phi^5(y) \phi(\lambda).
\]
Clearly $\int_0^\lambda \phi^5 \le \int_0^\infty \phi^5 < +\infty$, and moreover $\lambda^2 \phi(\lambda) \to 0$ as $\lambda \to \infty$, by exponential decay; hence, the above estimate reads
\[
\En(w_{\lambda},(0,1)) = \lambda^2 \En(\phi, (0,\lambda)) + C o(1),
\]
as $\lambda \to +\infty$. But
\[
\En(\phi, (0,\lambda)) + \En(\phi, (\lambda,+\infty)) = \En(\phi,(0,+\infty)) = 0 \quad \implies \quad \En(\phi, (0,\lambda)) = - \En(\phi, (\lambda,+\infty)),
\]
whence
\[
\En(w_{\lambda},(0,1)) = -\lambda^2 \En(\phi, (\lambda,+\infty)) + C o(1)
\]
as $\lambda \to +\infty$. To proceed further, we observe that
\[
\En(\phi, (\lambda,+\infty)) = \En(\phi(\cdot + \lambda), (0,+\infty)) \ge 0,
\]
since clearly $\phi(\cdot+\lambda) \in H^1(\R^+)$ with $\int_0^\infty \phi^2(y + \lambda)\,dy \in (0,\mu_{\R^+})$ for every $\lambda >0$. Therefore
\[
\En(w_{\lambda},(0,1)) \le  C o(1) <\eps
\]
for every $\lambda>0$ sufficiently large. This completes the proof.
\end{proof}

We are finally ready for the:

\begin{proof}[Proof of Proposition \ref{thm: tip gen}]
Let $\bar \mu$ given by Lemma \ref{lem:inf-a}. For $\mu \in (\bar \mu, \mu_{\R^+})$, we let $\cA = \cA_\eta^\mu$ and $\cB = \cB_\eta^\mu$. Lemmas \ref{lem:l-b} and \ref{lem:inf-a} ensures that \eqref{in min gen} holds, so that Proposition \ref{prop: comp} directly gives the thesis.
\end{proof}

\subsection{Non-compact graphs without a terminal point}\label{subsec:no_tip}

In this section we consider non-compact connected metric graphs $\cG$ having a finite number of vertices, bounded edges, and half-lines $\ell_1,\dots,\ell_m$, and satisfying the following structural assumptions:
\begin{itemize}
\item[($h_1$)] $\mu_{\cG} = \mu_{\R}$, and $\mu_{\cG \setminus \ell_i} = \mu_{\R}$ for every $i=1,\dots,m$.
\item[($h_2$)] The compact core $K$ of $\cG$ is not empty.
\end{itemize}
Assumption ($h_1$) means that the critical mass of the graph obtained from $\cG$ removing an arbitrary half-line (and the corresponding vertex at infinity) is $\mu_{\R}$. In view of \cite[Theorems 3.1 and 3.3]{AST2}, this rules out the presence of terminal points, and implies also that $m \ge 3$.

For any $\cG$ satisfying ($h_1$) and ($h_2)$, we have:
\begin{itemize}
\item[(i)] If $\mu < \mu_{\R}$, then $\cE_{\cG}(\mu) = 0$, and is not attained.
\item[(ii)] If $\mu > \mu_{\R}$, then $\cE_{\cG}(\mu) =-\infty$,
\end{itemize}
see \cite[Corollary 2.5]{AST2}. Our main result for this class of graph is the following:

\begin{proposition}\label{thm: notip gen}
Let $\cG$ be a non-compact metric graph satisfying ($h_1$) and ($h_2$). Then there exists $\bar \mu \in (0, \mu_{\R})$ such that, for every $\mu \in (\bar \mu, \mu_{\R})$, the functional $\En(\cdot, \cG)$ has a critical point on $H^1_{\mu}(\cG)$, which is a local minimizer.
\end{proposition}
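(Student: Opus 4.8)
The plan is to run the very same machinery used for Proposition \ref{thm: tip gen}, feeding the abstract variational principle of Proposition \ref{prop: comp} with the sets $\cA_\eta^\mu$ and $\cB_\eta^\mu$ as in \eqref{A B gen}, but now with the critical mass $\mu_\R$ playing the role previously held by $\mu_{\R^+}$. Accordingly, I would first fix $\delta \in (0,1/2)$ and then $\eta$ in the (nonempty, precisely because $\delta<1/2$) interval
\[
\mu_{\R^+} < \eta < \frac{\mu_\R}{1+2\delta},
\]
and work with masses $\mu \in [(1+2\delta)\eta,\mu_\R)$; this is the exact analogue of \eqref{hp dati tip}. The single genuinely new point, which dictates this choice, is the requirement $\eta>\mu_{\R^+}$: since here $\mu$ ranges up to $\mu_\R=2\mu_{\R^+}$, the constraint $\mu-\eta<\mu_{\R^+}$ in \eqref{hp dati} is no longer automatic (as it was in the tip case, where $\mu\le\mu_{\R^+}$) and forces $\eta$ to exceed $\mu_{\R^+}$. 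With these choices \eqref{hp dati} holds with uniform margins for every $\mu$ in the range, and it remains to verify the strict inequality \eqref{in min gen}.

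For the equicoercivity estimate in $\cB_\eta^\mu$ (the analogue of Lemma \ref{lem:eq-co}) I would argue exactly as there: by a pigeonhole on the $m$ half-lines some $\ell_m$ carries mass $\int_{\ell_m}u^2\ge\frac1m(\mu-(1+\delta)\eta)>0$, and I would split $\En(u,\cG)=\En(u,\mathcal{F})+\En(u,\ell_m)$ with $\mathcal{F}=\cG\setminus\ell_m$. By assumption ($h_1$) one has $\mu_{\mathcal{F}}=\mu_\R$, and since
\[
\int_{\mathcal{F}}u^2\le\mu-\tfrac1m\bigl(\mu-(1+\delta)\eta\bigr)<\mu_\R,
\]
inequality \eqref{coerc1} on $\mathcal{F}$ gives coercivity with a constant that remains positive as $\mu\to\mu_\R$, precisely because $(1+\delta)\eta<\mu_\R$. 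On $\ell_m\simeq\R^+$ I would apply \eqref{coerc1} with critical mass $\mu_{\R^+}$ and $\int_{\ell_m}u^2\le\mu-\eta\le\mu_\R-\eta<\mu_{\R^+}$, the last inequality carrying a uniform gap $\eta-\mu_{\R^+}>0$ exactly because $\eta>\mu_{\R^+}$. Taking the smaller of the two constants yields equicoercivity uniform in $\mu\in[(1+2\delta)\eta,\mu_\R]$, and the uniform lower bound $\inf_{\cB_\eta^\mu}\En\ge C_4>0$ then follows verbatim from the compactness argument of Lemma \ref{lem:l-b}: a sequence with vanishing energy would be bounded with vanishing gradient, hence converge weakly to a constant, hence to $0$ on the non-compact graph, contradicting $\int_K u^2\ge\eta>0$.

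To undercut this threshold inside $\cA$ (the analogue of Lemma \ref{lem:inf-a}) I would replace the half-soliton on a pendant by a full soliton truncated on a bounded edge. Using ($h_2$), pick a bounded edge $e\simeq[0,d]$ and set, for $\lambda>0$,
\[
v_\lambda(x):=\bigl(\phi_\lambda(x-\tfrac{d}{2})-\phi_\lambda(\tfrac{d}{2})\bigr)^+,\qquad \phi_\lambda(x)=\sqrt\lambda\,\phi(\lambda x),
\]
with $\phi$ as in \eqref{defsol}, extended by $0$ to the rest of $\cG$ (which is legitimate since $v_\lambda$ vanishes at both endpoints of $e$). As $\phi$ is even and $\En(\phi_\lambda,\R)=0$, the mass $m_\lambda:=\int_0^d v_\lambda^2$ increases continuously from $0$ to $\mu_\R$ as $\lambda$ spans $(0,+\infty)$, while a computation identical to the one in Lemma \ref{lem:inf-a} (Taylor-expanding the sixth power and using the exponential decay of $\phi$, which beats the $\lambda^2$ from the energy scaling) shows $\En(v_\lambda,e)\to 0$ as $\lambda\to+\infty$. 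Tuning $\lambda=\lambda(\mu)$ so that $m_{\lambda(\mu)}=\mu$ and defining $w_\mu=v_{\lambda(\mu)}$ on $e$ and $0$ elsewhere, one has $\int_K w_\mu^2=\mu>\eta$, hence $w_\mu\in\cA_\eta^\mu$, and $\En(w_\mu,\cG)\to 0$ as $\mu\to\mu_\R$. This produces $\bar\mu\in((1+2\delta)\eta,\mu_\R)$ with $\inf_{\cA_\eta^\mu}\En<C_4$ for all $\mu\in(\bar\mu,\mu_\R)$; since $\mu<\mu_\cG=\mu_\R$ gives $\inf_{\cA_\eta^\mu}\En\ge 0>-\infty$ through \eqref{coerc1}, the chain $-\infty<\inf_{\cA}\En<C_4\le\inf_{\cB}\En$ is exactly \eqref{in min gen}, and Proposition \ref{prop: comp} delivers the local minimizer.

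I expect the main obstacle to be the simultaneous calibration of $\eta$ and $\delta$ rather than any single estimate: the removed-half-line coercivity survives near $\mu_\R$ only if $\eta>\mu_{\R^+}$, whereas the coercivity on $\mathcal{F}$ and the need for a nondegenerate range $[(1+2\delta)\eta,\mu_\R)$ of admissible masses demand $(1+2\delta)\eta<\mu_\R$, and one must check that these two opposing pressures leave a nonempty window — which they do precisely because $\mu_\R=2\mu_{\R^+}$, so $\mu_{\R^+}<\eta<\mu_\R/(1+2\delta)$ is open for every $\delta<1/2$. Once this balancing is fixed and one has verified that all coercivity constants stay uniformly positive on a full left-neighborhood of $\mu_\R$, every remaining step is a faithful transcription of Section \ref{subsec:with_tip} with $\mu_\R$ substituted for $\mu_{\R^+}$.
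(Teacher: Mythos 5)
Your proposal is correct and follows essentially the same route as the paper: the same choice of window $\mu_{\R^+}<\eta$, $(1+2\delta)\eta<\mu_\R$ (the paper's \eqref{hp dati notip}), the same pigeonhole/removed-half-line equicoercivity and uniform lower bound on $\cB_\eta^\mu$, the same test function (a full soliton centered at the midpoint of a bounded edge, truncated to vanish at both endpoints) to push $\inf_{\cA_\eta^\mu}\En$ below $C_4$, and the same conclusion via Proposition \ref{prop: comp}. Your explicit remark that the calibration window for $\eta$ is nonempty precisely because $\mu_\R=2\mu_{\R^+}$ and $\delta<1/2$ is a nice clarification of what the paper leaves implicit in \eqref{hp dati notip}.
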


The proof of the proposition follows closely the one of Proposition \ref{thm: tip gen}. Precisely, for $\delta, \eta, \mu>0$ such that
\begin{equation}\label{hp dati notip}
\eta>\mu_{\R^+}, \quad (1+2\delta)\eta<\mu_{\R}, \quad \text{and} \quad  \mu \in [(1+2\delta)\eta,\mu_{\R}],
\end{equation}
we introduce
\[
\begin{split}
\cA_\eta^\mu & := \left\{u \in H^1_\mu(\cG)\left| \ \int_K u^2 \ge \eta \right.  \right\} \\
\cB_\eta^\mu & := \left\{u \in H^1_\mu(\cG)\left| \ \eta \le \int_K u^2 \le (1+\delta) \eta \right.\right\}.
\end{split}
\]
Again, \eqref{hp dati} is trivially satisfied by our choice in \eqref{hp dati notip}.

\begin{lemma}[Equicoercivity in $\cB_\eta^\mu$]\label{lem:eq-co'}
There exists $C_1=C_1(\delta, \eta, m)>0$ (independent of $\mu$) such that
\[
\En(u,\cG) \ge C_1\|u'\|_{L^2(\cG)}^2
\]
for every $u \in \cB_\eta^\mu$, for every $\mu \in [(1+2\delta)\eta,\mu_{\R}]$.
\end{lemma}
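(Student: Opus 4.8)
The plan is to mimic the proof of Lemma \ref{lem:eq-co} for the terminal-point case, exploiting assumption ($h_1$) in place of the relation $\mu_{\cF}=\mu_{\R^+}$ that was available there. First I would fix $u \in \cB_\eta^\mu$ and observe, exactly as in \eqref{3pag3}, that the total mass on the half-lines is controlled: since $\sum_{i=1}^m \int_{\ell_i} u^2 = \mu - \int_K u^2 \in [\mu-(1+\delta)\eta,\ \mu-\eta]$, there must exist an index $\bar i$, say $\bar i=m$, for which $\int_{\ell_m} u^2 \ge \frac1m(\mu-(1+\delta)\eta) \ge \frac{\delta\eta}{m}$. The point of isolating a half-line carrying a definite amount of mass is that removing it leaves behind a graph whose critical mass is still the full $\mu_{\R}$, thanks to ($h_1$).

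Next I would split the energy as $\En(u,\cG) = \En(u,\cF) + \En(u,\ell_m)$, where $\cF := \cG \setminus \ell_m$. On $\cF$, assumption ($h_1$) gives $\mu_{\cF} = \mu_{\R}$, so \eqref{coerc1} applies with the critical mass $\mu_{\R}$: one gets $\En(u,\cF) \ge \frac12\big(1-(\int_{\cF} u^2/\mu_{\R})^2\big)\|u'\|_{L^2(\cF)}^2$. The mass on $\cF$ satisfies $\int_{\cF} u^2 = \mu - \int_{\ell_m} u^2 \le \mu - \frac{\delta\eta}{m} \le \mu_{\R} - \frac{\delta\eta}{m}$, which is strictly below $\mu_{\R}$ with a gap independent of $\mu$; hence the prefactor is bounded below by a positive constant $C_2(\delta,\eta,m)$, giving $\En(u,\cF) \ge C_2 \|u'\|_{L^2(\cF)}^2$. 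On the single half-line $\ell_m \simeq \R^+$ I would again use \eqref{coerc1}, now with critical mass $\mu_{\R^+}$: since $\int_{\ell_m} u^2 \le \mu - \int_K u^2 \le \mu - \eta \le \mu_{\R} - \eta < \mu_{\R^+}$ by the first condition in \eqref{hp dati notip} (namely $\eta > \mu_{\R^+}$, so $\mu_{\R} - \eta < \mu_{\R} - \mu_{\R^+} = \mu_{\R^+}$), the relevant prefactor is bounded below by a positive $C_3(\eta)$, yielding $\En(u,\ell_m) \ge C_3 \|u'\|_{L^2(\ell_m)}^2$. Adding the two bounds gives the claim with $C_1 = \min\{C_2,C_3\}$.

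The step requiring the most care is the estimate on the half-line $\ell_m$: one must verify that the mass there stays strictly below the \emph{half-line} critical mass $\mu_{\R^+}$, not merely below $\mu_{\R}$, and it is precisely the otherwise puzzling hypothesis $\eta > \mu_{\R^+}$ in \eqref{hp dati notip} that guarantees this, via the identity $\mu_{\R^+} = \mu_{\R}/2$ so that $\mu_{\R} - \eta < \mu_{\R} - \mu_{\R^+} = \mu_{\R^+}$. I would make sure the constants $C_2, C_3$ are extracted so as to depend only on $\delta, \eta, m$ and not on $\mu$, which is automatic here since all the mass gaps below the critical thresholds are uniform in $\mu \in [(1+2\delta)\eta, \mu_{\R}]$; this uniformity is what will later allow the uniform lower bound in $\cB_\eta^\mu$ (the analogue of Lemma \ref{lem:l-b}) to go through by the same compactness-contradiction argument.
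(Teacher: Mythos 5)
Your proposal is correct and follows essentially the same argument as the paper: the same selection of a half-line $\ell_m$ carrying mass at least $\delta\eta/m$, the same splitting $\En(u,\cG)=\En(u,\cF)+\En(u,\ell_m)$ with $\cF=\cG\setminus\ell_m$, the use of ($h_1$) to apply \eqref{coerc1} on $\cF$ with critical mass $\mu_\R$, and the use of $\eta>\mu_{\R^+}$ to keep the mass on $\ell_m$ strictly below $\mu_{\R^+}$. Your explicit remark that the hypothesis $\eta>\mu_{\R^+}$ is exactly what makes the half-line estimate work (via $\mu_\R-\eta<\mu_\R-\mu_{\R^+}=\mu_{\R^+}$) correctly identifies the role of that otherwise unmotivated assumption in \eqref{hp dati notip}, matching the paper's reasoning.
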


\begin{proof}
If $u \in \cB_\eta^\mu$, then
\[
\sum_{i=1}^m \int_{\ell_i} u^2 = \mu - \int_{K} u^2 \in \left[\mu - (1+\delta) \eta, \mu - \eta\right].
\]
Therefore, there exists an index $\bar i \in \{1,\dots, m\}$, say $\bar i=m$, such that
\begin{equation}\label{3pag3'}
\int_{\ell_m} u^2 \ge \frac1m\left( \mu - (1+\delta) \eta \right) \ge \frac{\delta \eta}{m}.
\end{equation}
Let $\mathcal{F}$ be the metric graph obtained removing the half-line $\ell_m$ and the corresponding vertex at infinity from $\cG$. By ($h_1$), we know that $\mu_{\mathcal{F}} = \mu_{\R}$. In particular, \eqref{coerc1} gives that
\[
\En(u, \mathcal{F}) \ge \frac12 \left(1- \left(\frac{\int_{\mathcal{F}} u^2}{\mu_{\R}}\right)^2\right) \|u'\|_{L^2(\cF)}^2.
\]
Notice that, by \eqref{3pag3'},
\[
\int_{\mathcal{F}} u^2 = \int_{\mathcal{G}} u^2 - \int_{\ell_m} u^2 \le \mu - \frac{\delta \eta}{m} \le \mu_{\R} - \frac{\delta \eta}{m},
\]
which is strictly smaller than $\mu_{\R}$. Thus, it follows that for a constant $C_2(\delta,\eta,m)>0$
\begin{equation}\label{4pag3'}
\En(u, \mathcal{F}) \ge C_2(\delta,\eta,m)  \|u'\|_{L^2(\cF)}^2.
\end{equation}
On the other hand, always by \eqref{coerc1}, on the half-line $\ell_m$
\[
\begin{split}
\En(u, \ell_m) &\ge  \frac12 \left(1- \left(\frac{\int_{\ell_m} u^2}{\mu_{\R^+}}\right)^2\right) \|u'\|_{L^2(\ell_m)}^2 \end{split}
\]
and
\[
\int_{\ell_m} u^2 \le \mu - \int_{K} u^2 \le \mu-\eta \le \mu_{\R}-\eta < \mu_{\R^+},
\]
since $\eta  >\mu_{\R^+}$, so that
\begin{equation}\label{5pag3'}
\En(u, \ell_m) \ge C_3(\eta) \|u'\|_{L^2(\ell_m)}^2
\end{equation}
for some $C_3(\eta)>0$. Comparing \eqref{4pag3'} and \eqref{5pag3'}, we finally deduce that
\[
\En(u,\cG) = \En(u, \mathcal{F}) + \En(u, \ell_m) \ge \min\{C_2(\delta,\eta,m), C_3(\eta)\} \|u'\|_{L^2(\cG)}^2,
\]
which is the desired result.
\end{proof}

 \begin{lemma}[Uniform lower bound in $\cB_\eta^\mu$]\label{lem:l-b'}
There exists $C_4=C_4(\delta,\eta,m)>0$ such that
 \[
\En(u,\cG) \ge C_4
\]
for every $u \in \cB_\eta^\mu$, for every $\mu \in [(1+2\delta)\eta,\mu_{\R}]$.
 \end{lemma}

\begin{proof}
The proof is completely analogue to the one of Lemma \ref{lem:l-b}, and hence is omitted.
\end{proof}

\begin{lemma}[Infimum in $\cA_\eta^\mu$]\label{lem:inf-a'}
There exists $\bar \mu \in ((1+2\delta)\eta, \mu_{\R})$ such that
\[
\inf_{u \in \cA_\eta^\mu} \ \En(u,\cG) < C_4
\]
for every $\mu \in (\bar \mu, \mu_{\R})$.
\end{lemma}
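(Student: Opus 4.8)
I must exhibit, for masses $\mu$ close to $\mu_{\R}$ from below, a competitor $w_\mu \in \cA_\eta^\mu$ whose energy $\En(w_\mu,\cG)$ is less than the threshold constant $C_4$ of Lemma~\ref{lem:l-b'}. The structural difference from the tip case (Lemma~\ref{lem:inf-a}) is that $\cG$ has no terminal point, so I cannot simply concentrate a half-soliton on a pendant edge that conveniently vanishes at a degree-one vertex. Instead the critical mass here is $\mu_\R$, which is the mass of a \emph{full} soliton on $\R$, and assumption ($h_1$) guarantees $m\ge 3$ half-lines. So the natural idea is to build a competitor that looks like a full soliton $\phi_\lambda$ sitting mostly on the compact core $K$, with its two exponentially decaying tails escaping into two of the half-lines, truncated so as to be continuous and to carry the prescribed mass.

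**The plan.** I would proceed exactly as in the proof of Lemma~\ref{lem:inf-a}, adapting the construction to the full soliton. Fix two distinct half-lines, say $\ell_1$ and $\ell_2$, both incident to vertices of $K$ (using that $K\neq\emptyset$ by ($h_2$) and that $\cG$ is connected), together with a path inside $K$ joining their endpoints. I would then place a rescaled soliton $\phi_\lambda(x)=\sqrt\lambda\,\phi(\lambda x)$ along this ``line through the graph'', centered so that its peak lies in $K$ and its two tails run out along $\ell_1,\ell_2$; I truncate by subtracting the value of $\phi_\lambda$ at the far endpoints and taking positive parts, i.e. a competitor of the form $w_\lambda=(\phi_\lambda(\cdot)-c_\lambda)^+$ along the chosen path-plus-two-half-lines, extended by zero on the rest of $\cG$. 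As $\lambda\to+\infty$ the soliton concentrates, its mass tends to $\mu_\R$ (the full soliton mass), and by the same monotone/dominated convergence argument the mass $m_\lambda$ of this truncated profile is continuous and increasing in $\lambda$ with $\lim_{\lambda\to\infty}m_\lambda=\mu_\R$; hence for every $\mu<\mu_\R$ close enough there is a unique $\lambda(\mu)$ giving mass exactly $\mu$, and $\lambda(\mu)\to+\infty$ as $\mu\to\mu_\R^-$.

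**Two points to check.** First, I must verify $w_\mu\in\cA_\eta^\mu$, i.e. that $\int_K w_\mu^2\ge\eta$. Since the peak of the concentrating soliton sits on $K$ and $\eta<\mu_\R$ (indeed $\eta>\mu_{\R^+}=\mu_\R/2$ by \eqref{hp dati notip}, but still $\eta<\mu_\R$), for $\lambda$ large the portion of mass carried on the compact core exceeds $\eta$: as $\lambda\to\infty$ the fraction of the soliton mass concentrated on any fixed-length central segment tends to $1$, so $\int_K w_\mu^2\to\mu_\R>\eta$. Second, and this is the energy estimate, I would show $\En(w_\lambda,\cG)\to 0$ (in fact it can be made $<C_4$) as $\lambda\to+\infty$, by the same Taylor/exponential-decay bookkeeping as in Lemma~\ref{lem:inf-a}: the dominant term is $\lambda^2\En(\phi,(-L_\lambda,L_\lambda))$ over the truncation window, which equals $-\lambda^2\En(\phi,\{|x|>L_\lambda\})\le 0$ because $\En(\phi,\R)=0$ and the two escaping tails have nonnegative energy (each tail is a function in $H^1(\R^+)$ of mass strictly below $\mu_{\R^+}$, so \eqref{coerc1} makes its energy $\ge 0$), while the truncation correction is controlled by $\lambda^2\phi(\lambda)\int\phi^5=C\,o(1)$.

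**The main obstacle.** The delicate geometric point, absent in the pendant model, is arranging the truncated full soliton to be \emph{continuous} across the finitely many vertices of the chosen path in $K$ and at the two junctions with $\ell_1,\ell_2$, while simultaneously keeping it zero on the remaining edges so that $w_\mu$ genuinely lies in $H^1(\cG)$ and its energy splits as a sum over the support. Because $\phi_\lambda$ is smooth and I subtract a single constant $c_\lambda=\phi_\lambda(L_\lambda)$ before taking the positive part, continuity along the central path is automatic, and the positive part kills the function precisely at the two far endpoints where it meets the rest of $\cG$ at value zero; so continuity holds, at the cost of a harmless loss of $C^1$-regularity (irrelevant for the $H^1$ energy). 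Verifying that the split energy estimate survives this truncation is routine once the window $L_\lambda$ is chosen (e.g.\ half the length of the central path, or growing slowly), and the argument then concludes as in Lemma~\ref{lem:inf-a}: the thesis holds for $\eps=C_4$, giving the claimed $\bar\mu\in((1+2\delta)\eta,\mu_\R)$.
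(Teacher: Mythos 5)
There is a genuine gap, and it is precisely at the point you dismiss as automatic: continuity, i.e.\ membership of your competitor in $H^1_\mu(\cG)$. Your truncated soliton is strictly positive along the whole central path (its peak is there, and the positive part only cuts the far tails), and also positive on the initial segments of $\ell_1,\ell_2$. But every vertex traversed by this path --- the two junction vertices where $\ell_1,\ell_2$ meet the compact core, and any intermediate vertex of the path --- has degree at least $3$ in the graphs under consideration (terminal points are excluded by ($h_1$), and degree-$2$ vertices are excluded w.l.o.g.). At such a vertex $v$ there is at least one incident edge \emph{not} belonging to your path, on which you have set $w_\mu \equiv 0$; since $w_\mu(v)>0$ along the path, the function is discontinuous at $v$, hence $w_\mu \notin H^1(\cG)$ and it is not an admissible competitor. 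Your sentence ``the positive part kills the function precisely at the two far endpoints where it meets the rest of $\cG$'' is false: the function meets the rest of $\cG$ at every vertex of the path, not only at the far ends. The only way to salvage the construction is to shrink the truncation window until the support lies inside a \emph{single} bounded edge and the function vanishes at both of its endpoints --- at which point the tails no longer run into the half-lines at all, and you have reproduced the paper's proof rather than yours.

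Indeed, this is exactly what the paper does: it picks one bounded edge $e$ of the compact core, identified with $(-1,1)$, takes $w_\lambda=(\phi_\lambda-\phi_\lambda(1))^+ \in H^1_0(-1,1)$ (the truncated \emph{full} soliton, symmetric about the midpoint, vanishing at both endpoints of $e$), and extends it by zero; extension by zero is then continuous precisely because $w_\lambda$ vanishes at the two vertices of $e$. Membership in $\cA_\eta^\mu$ becomes trivial (all the mass $\mu\ge(1+2\delta)\eta>\eta$ sits on $K$, with no concentration argument needed), the mass $m_\lambda\to\mu_\R$ as $\lambda\to+\infty$ so every $\mu<\mu_\R$ close to $\mu_\R$ is attained, and the energy estimate $\En(w_{\lambda},\cG)=\En(w_\lambda,(-1,1))\to 0$ follows from the computations of Lemma~\ref{lem:inf-a} by symmetry. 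Your Taylor/exponential-decay bookkeeping and the use of \eqref{coerc1} on the tails are fine in spirit, but they are carried out for a competitor that does not belong to the admissible class, so the argument as written does not prove the lemma.
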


\begin{proof}
Since $K \neq \emptyset$ by assumption ($h_2$), and $\cG$ does not have terminal points, there exists a bounded edge $e$, of length $d>0$, such that both the vertices of $e$ are not terminal points. Without loss of generality, we suppose that $d=2$, and we identify $e$ with the interval $(-1,1)$. As in Lemma \ref{lem:inf-a}, we show that for any $\eps>0$ there exists $\mu_\eps \in ((1+2\delta)\eta, \mu_{\R})$ such that
\[
\mu \in (\mu_\eps, \mu_{\R}) \quad \implies \quad \exists w_\mu \in \cA_\eta^\mu \quad \text{with} \quad \En(w_\mu,\cG) <\eps.
\]
For the exact choice of $w_\mu$, we consider the soliton $\phi$, its scaling $\phi_\lambda(x):= \sqrt{\lambda} \phi(\lambda x)$, with $\lambda>0$, and we let
\[
w_\lambda(x):= (\phi_\lambda(x)- \phi_\lambda(1))^+.
\]
It is clear that $w_\lambda \in H^1_0(-1,1)$, with $w_\lambda > 0$ on $(-1,1)$, and $w_\lambda$ is symmetric with respect to $0$ (the medium point of the edge $e$) for every $\lambda>0$. Since
\[
\|w_\lambda\|_{L^p(-1,1)}^2 = 2\|w_\lambda\|_{L^p(0,1)}^2, \quad \text{and} \quad \|w_\lambda'\|_{L^2(-1,1)}^2 = 2\|w_\lambda'\|_{L^2(0,1)}^2,
\]
the same computations of Lemma \ref{lem:inf-a} allow to show that for every $\lambda \in (0,\mu_{\R})$ there exists a unique $\lambda(\mu) >0$ such that the function
\[
w_\mu:= \begin{cases}  w_{\lambda(\mu)} & \text{on $e$} \\ 0 & \text{in $\cG \setminus e$} \end{cases}
\]
stays in $H^1_\mu(\cG)$, and moreover $E(w_\mu, \cG) = E(w_\mu,(-1,1)) \to 0$ as $\mu \to (\mu_{\R})^-$.
\end{proof}

\begin{proof}[Proof of Proposition \ref{thm: notip gen}]
Let $\bar \mu$ given by Lemma \ref{lem:inf-a'}. For $\mu \in (\bar \mu, \mu_{\R})$, we let $\cA = \cA_\eta^\mu$ and $\cB = \cB_\eta^\mu$. By Lemmas \ref{lem:l-b'} and \ref{lem:inf-a'}, Proposition \ref{prop: comp} directly gives the thesis of the proposition.
\end{proof}

\begin{remark}\label{rem:stargraph}
As observed in the introduction, if $\cG$ is a star-graph with at least $3$ half-lines, then $\cG$ has not a bounded edge, and hence Theorem \ref{thm:main_intro} does not apply. It would be tempting in this case to fix an arbitrary compact set $K \subset \cG$ made of subintervals of each half-line, define $\cA_\eta^\mu$ and $\cB_\eta^\mu$ as above, and then mimic the proof of Proposition \ref{thm: notip gen}. Notice that the choice of $K$ induces the introduction of new fake vertices in the graph, with degree $2$. This strategy however cannot work, and in particular our equicoercivity lemma fails. Indeed, by centering and shrinking a soliton near one of the fake vertices, it is not difficult to see that the infimum of the energy on the corresponding set $\cB_\eta^\mu$ tends to $0$, in sharp contrast with the case when $\cG$ has a true compact core. This new phenomenon is possible exactly since the new vertices have degree $2$.
\end{remark}

\appendix

\section{Stability properties of the model solutions}\label{app:A}

In this appendix we sketch the proof that the solutions $u=u_z$ constructed in Section \ref{soltip} for the model graph, having total mass $\mu=\mu(z)$ as in \eqref{lowmass}, are (conditionally) orbitally stable  when the parameter $z$ is large enough. As we already mentioned, this follows
by the abstract theory of Grillakis, Shatah and Strauss, see
\cite[Thms. 2-3]{GrillakisShatahStrauss}.
Roughly, we will consider the general abstract structure, focusing on the Morse index of the solutions, in \ref{app:A1}, while in \ref{app:A2} we will deal with the monotonicity properties of the map $z\mapsto\mu(z)$.

\subsection{The abstract setting}\label{app:A1}

For the sake of comparison with the notations used in \cite{GrillakisShatahStrauss}, we denote by
$X=H^1(\mathcal{G})$ the space of the {complex valued} functions which are $H^1$ on every edge of the graph and continuous at every vertex, endowed with the usual (real) inner product
(see \cite[Section 6C]{GrillakisShatahStrauss}). Denoting the mass (charge) $Q$ as
\begin{equation}
Q(u)=-\frac{1}{2}\int_{\mathcal{G}}|u|^2
\end{equation}
we have that the {bound state equation} $E'(\phi_{\omega})-\omega Q'(\phi_{\omega})=0$ (with $\omega\in\R$, see \cite[Assumption 2]{GrillakisShatahStrauss}) reads
\begin{equation}
\label{bsteq}
-\phi''-|\phi|^4\phi+\omega\phi=0,\qquad \text{where }\phi=\phi_{\omega}\in H^1(\mathcal{G})
\end{equation}
is real and satisfies the Kirchhoff boundary conditions at every vertex. Restricting to the family of solutions constructed in Section \ref{soltip}, parametrized on $z\in\R^+$, we have that
\[
\omega=\frac{\Lambda^2(z)}3,\qquad \phi_\omega = u_z.
\]
At this point, assumptions $1$ and $2$ of the abstract theory of \cite{GrillakisShatahStrauss}
hold true in a standard way. Then, we can define the functional
\begin{equation}
\label{functd}
d(\omega)=E(\phi)-\omega Q(\phi)\,,
\end{equation}
and the operator from $X$ to $X^*$
\begin{equation}
\label{linop}
H_{\omega}=E''(\phi)-\omega Q''(\phi)\,.
\end{equation}
We consider the (decoupled) {eigenvalue equation} $H_{\omega}\,\chi=\nu\,\chi$, $\chi = \chi_1+ i
\chi_2$,
which writes, on every edge of the graph,
\begin{align}
\label{realsys}
  -\chi_1''-5\phi^4\,\chi_1+\frac{\Lambda^2}{3}\,\chi_1&= \nu\,\chi_1, \\
\label{imsys}
  -\chi_2''-\phi^4\,\chi_2+\frac{\Lambda^2}{3}\,\chi_2 &= \nu\,\chi_2,
\end{align}
where both $\chi_1$ and $\chi_2$ satisfy the usual continuity and Kirchhoff conditions at each vertex. It is readily verified that, if $\phi=u_z$, then
\[
\nu=0,\ \chi=i \phi
\]
satisfy the above eigenvalue problem. Notice that, since $\phi>0$, there are no other nontrivial solutions of \eqref{imsys} for $\nu\le0$, but $\chi_2=\phi$. As a consequence, any other eigenfunction with non-positive eigenvalue has to be real valued. On the other hand, the negative part of the spectrum of $H_\omega$ is not empty, since
\[
\langle H_{\omega}\phi,\phi\rangle= -4 \int_{\mathcal{G}}|\phi|^6<0.
\]
Therefore, in order to satisfy assumption 3 of \cite{GrillakisShatahStrauss}, we need to show that the subspace of eigenfunctions of \eqref{realsys} with non-positive eigenvalues has dimension 1. On the contrary,  there exists a real valued $\chi_\omega$ satisfying
\begin{equation}
\label{firstcond}
\langle H_{\omega}\phi,\chi_{\omega}\rangle =-4\int_{\mathcal{G}}\phi^5\chi_{\omega}=0
\end{equation}
and
\begin{equation}
\label{seccond}
\langle H_{\omega}\chi_{\omega},\chi_{\omega}\rangle=\int_{\mathcal{G}}|\chi'_{\omega}|^2-5\phi^4|\chi_{\omega}|^2 +\frac{\Lambda^2}{3}|\chi_{\omega}|^2\le0\,.
\end{equation}
We may take $\chi_{\omega}$ of unit total mass. We will prove that the above conditions are incompatible in the limit $\omega\to\infty$ (i.e. $\Lambda\to\infty$).
Recall now that on the positive (negative) real line
$$\phi(x)=\sqrt{\Lambda}\,\phi_1(\Lambda x+z)\,\,\,\big (\sqrt{\Lambda}\,\phi_1(\Lambda x-z)\big )\,,$$
where $\phi_1(x)=\cosh^{-1/2}(2x/\sqrt 3)$ and $\Lambda=\Lambda(z)$ is given by \eqref{lamz}. Since $\Lambda(z)\approx c z$ at infinity, we have that
$\phi\big |_{\R}\rightarrow 0$
uniformly for $z\to +\infty$. It follows by \eqref{firstcond} that
\begin{equation}
\label{firstcondlim}
\lim_{z\to +\infty}\int_0^1\phi^5(x)\,\chi_{\omega(z)}(x)\,dx=0\,.
\end{equation}
To deduce information from this limit, we introduce the new variable $\xi=\phi/\sqrt\Lambda$ which is related to $x$  by
\begin{equation}
x(\xi)=\frac{\sqrt 3}{\Lambda}\int_{\phi^*}^{\xi}\frac{dv}{\sqrt{6E^*-v^6+v^2}},\quad\quad \phi^*\le\xi\le u_M^*\,,
\end{equation}
where $\Lambda=\Lambda(z)$ is given again by \eqref{lamz} and $E^*$, $\phi^*$, $u_M^*$, are the functions of $z$ defined in \eqref{stars} and satisfying $E^*\to 0$, $\phi^*\to 0$, $u_M^*\to 1$ for $z\to +\infty$. Then, for every $\xi\in (\phi^*,u_M^*]$,
\begin{equation}
x(\xi)=1-\frac{\sqrt 3}{\Lambda}\int_{\xi}^{u_M^*}\frac{dv}{\sqrt{6E^*-v^6+v^2}}\longrightarrow 1
\end{equation}
for $z\to \infty$. In the same limit, we have
\begin{equation}
\int_0^1\phi^5(x)\,\chi_{\omega(z)}(x)\,dx=
{\sqrt 3}{\Lambda}^{3/2}(z)\int_{\phi^*}^{u_M^*}\frac{\xi^5\,\chi_{\omega(z)}(x(\xi))}{\sqrt{6E^*-\xi^6+\xi^2}}\,d\xi\,
\end{equation}
\begin{equation}
={\sqrt 3}{\Lambda}^{3/2}(z)\Big (\chi_{\omega(z)}(1)\int_{0}^{1}\frac{\xi^4}{\sqrt{1-\xi^4}}\,d\xi\,+o(1)\Big )\,.
\end{equation}
By \eqref{firstcondlim} we get
\begin{equation}
\lim_{z\to +\infty}\chi_{\omega(z)}(1)=0\,.
\end{equation}
By evaluating in the same way the integral in \eqref{seccond}, we find
\begin{equation}
5\int_{\mathcal{G}}|\phi(x)|^4\,|\chi_{\omega}(x)|^2=o(1)+
5\int_0^1|\phi(x)|^4\,|\chi_{\omega}(x)|^2={\Lambda}(z)\Big (C\,\chi_{\omega(z)}(1)^2+o(1)\Big )\le \Lambda(z)\,
\end{equation}
for $z$ large enough. But the last term in \eqref{seccond} is equal to $\Lambda^2/3$ (by the
normalization of $\chi_{\omega}$) so that such condition can not hold for large $z$. By this
contradiction, we have that assumption 3 in \cite{GrillakisShatahStrauss} holds true.

\subsection{Monotonicity properties of \texorpdfstring{$\mu(z)$}{mu(z)}}\label{app:A2}

In order to apply \cite[Thms. 2-3]{GrillakisShatahStrauss}, the last condition we need to check is that the map
$\omega \mapsto d(\omega)$, as defined in \eqref{functd}, is convex for large $\omega$.
We recall that
$$
d'(\omega)=-Q(\phi_\omega)=\frac{1}{2}\int_{\mathcal{G}}|\phi_\omega|^2
=\frac{1}{2}\int_{\mathcal{G}}|u_z|^2
$$
(see \cite[Eq.(2.20)]{GrillakisShatahStrauss}). On the other hand, $\omega=\Lambda^2(z)/3$, and
it can be shown by \eqref{lamz} that the map $z\mapsto \Lambda(z)$  is increasing to $+\infty$ as
$z\to+\infty$. As a consequence, we are left to show that the function
$$
\mu(z):=\int_{|x|>z} \frac{1}{\cosh(2 x/\sqrt 3)}\,dx+\sqrt{3} \int_{\phi^*}^{u_M^*}\frac{v^2}{\sqrt{6H^*-v^6+v^2}}\, dv\,
$$
defined in \eqref{lowmass}, is increasing for large values of $z$ (as it was shown, numerically, in Fig. \ref{fig:num}).

To this aim, we first recall the relations
\begin{equation}
\label{setstars}
\phi^*(z)=\cosh(2z/\sqrt 3)^{-1/2},\quad\quad
H^*(z)=\frac{1}{2}\phi^*(z)^2\big (1- \phi^*(z)^4\big )=\frac{1}{6}u_M^*(z)^2\big (u_M^*(z)^4-1\big )\,,
\end{equation}
and
\begin{equation}
\label{derstars}
\begin{split}
(\phi^*)'(z)&=-\frac{1}{\sqrt 3}\phi^*(z)\big (1- \phi^*(z)^4\big )^{1/2},\\
(H^*)'(z)&=-\frac{1}{\sqrt 3}\phi^*(z)^2\big (1- \phi^*(z)^4\big )^{1/2}\big (1- 3\phi^*(z)^4\big )\,.
\end{split}
\end{equation}
Moreover, from the last identity in \eqref{setstars} we deduce that
\begin{equation}
\label{asistars}
u_M^*(z)^4-1 \approx 3\phi^*(z)^2
\end{equation}
as $\phi^*\to 0$ (since $u_M^*\to 1^+$ necessarily). We want to estimate the derivative
\begin{equation}
\label{dermu0}
\mu'(z):=-\frac{2}{\cosh(2z/\sqrt 3)}+\sqrt{3}\frac{d}{dz} \int_{\phi^*}^{u_M^*}\frac{v^2}{\sqrt{6H^*-v^6+v^2}}\, dv\,
\end{equation}
for $z\to +\infty$. To this aim, by recalling that both $\phi^*\to 0^+$ and $H^*\to 0^+$ in this limit, we fix two positive numbers $v_0$, $v_1$ such that
$$
\phi^*<v_0<3^{-1/4}<v_1<1\,,
$$
and split the integral at the right hand side of \eqref{dermu0} into three pieces
$$
\int_{\phi^*}^{v_0}\frac{v^2}{\sqrt{6H^*-v^6+v^2}}\, dv\,+\int_{v_0}^{v_1}\frac{v^2}{\sqrt{6H^*-v^6+v^2}}\, dv\,+\int_{v_1}^{u_M^*}\frac{v^2}{\sqrt{6H^*-v^6+v^2}}\, dv.
$$
The limit of the derivative of the second integral is readily evaluated:
$$
\frac{d}{dz}\int_{v_0}^{v_1}\frac{v^2}{\sqrt{6H^*-v^6+v^2}}\, dv\,=-3\, (H^*)'(z)\,\int_{v_0}^{v_1}\frac{v^2}{(6H^*-v^6+v^2)^{3/2}}\, dv\,\approx
$$
(by \eqref{derstars})
\begin{equation}
\label{dersec}
\approx
\sqrt 3\,\phi^*(z)^2\,\int_{v_0}^{v_1}\frac{v}{(1-v^4)^{3/2}}\, dv\,.
\end{equation}
Before taking the derivative of the remaining terms, we change the variable of integration by setting
$$t=v^6-v^2,\quad\quad\quad dt=2v(3v^4-1)\,dv\,.$$
Then,  the first integral will have the limits $t( \phi^*)=-2H^*$ and $t(v_0):=-t_0$ ($<-2H^*$) :
$$
\int_{\phi^*}^{v_0}\frac{v^2}{\sqrt{6H^*-v^6+v^2}}\, dv\,=
\int_{-t_0}^{-2H^*}\frac{1}{2\sqrt{6H^*-t}}\,\frac{v(t)}{1-3v(t)^4}\, dt=
$$
(integrating by parts)
$$
=\Big [-\sqrt{6H^*-t}\,\frac{v(t)}{1-3v(t)^4}\Big ]_{-t_0}^{-2H^*}+
\int_{-t_0}^{-2H^*}\sqrt{6H^*-t}\,\frac{9v(t)^4+1}{(1-3v(t)^4)^2} v'(t)\, dt
$$
$$
=-\sqrt{8H^*}\,\frac{\phi^*}{1-3(\phi^*)^4}+\sqrt{6H^*+t_0}\,\frac{v_0}{1-3v_0^4}
-\int_{\phi^*}^{v_0}\sqrt{6H^*-v^6+v^2}\,\frac{9v^4+1}{(1-3v^4)^2}\, dv\,.
$$
Now, by elementary calculations one checks that in the limit $z\to +\infty$ the derivatives of the above terms are
$\mathcal{O}((\phi^*)^2)$ (or even ${o}((\phi^*)^2)$) \emph{except for}
\begin{multline*}
-3(H^*)'(z)\,\int_{\phi^*}^{v_0}\frac{1}{\sqrt{6H^*-v^6+v^2}}\,\frac{9v^4+1}{(1-3v^4)^2}\, dv\,
\\
\approx
\sqrt 3\,\phi^*(z)^2\,\int_{\phi^*}^{v_0}\frac{1}{\sqrt{6H^*-v^6+v^2}}\,\frac{9v^4+1}{(1-3v^4)^2}\, dv\,.
\end{multline*}
In order to estimate the last integral we write as before
$$
6H^*-v^6+v^2=\big [(u_M^*)^2-v^2\big ]\,\big [(u_M^*)^4-1+(u_M^*)^2v^2+v^4\big ]\approx
$$
(by \eqref{asistars})
$$
\approx \big [(u_M^*)^2-v^2\big ]\,\big [3(\phi^*)^2+(u_M^*)^2v^2+v^4\big ]\le
\big [(u_M^*)^2-v^2\big ]\,\big [3v^2+(u_M^*)^2v^2+v^4\big ]\le C v^2
$$
for every $v\in [\phi^*,v_0]$, with $C>0$. Hence, we get
\begin{equation}
\label{logest}
\int_{\phi^*}^{v_0}\frac{1}{\sqrt{6H^*-v^6+v^2}}\,\frac{9v^4+1}{(1-3v^4)^2}\, dv\,\ge \frac{1}{\sqrt C}
\int_{\phi^*}^{v_0}\frac{dv}{v}= \frac{1}{\sqrt C} \ln\Big (\frac{v_0}{\phi^*}\Big )\,.
\end{equation}
We conclude that, for $z\to +\infty$,
\begin{equation}
\label{derprim}
\frac{d}{dz}\int_{\phi^*}^{v_0}\frac{v^2}{\sqrt{6H^*-v^6+v^2}}\, dv\,\ge \sqrt{\frac{3}{C}}
\phi^*(z)^2\ln\Big (\frac{1}{\phi^*(z)}\Big )+\mathcal{O}\big (\phi^*(z)^2\big )\,.
\end{equation}
Finally, by the same change of variable in the third integral, we get $t(v_1):=-t_1$,
$t(u_M^*)=6H^*$ and
$$
\int_{v_1}^{u_M^*}\frac{v^2}{\sqrt{6H^*-v^6+v^2}}\, dv\,=\int_{-t_1}^{6H^*}\frac{1}{2\sqrt{6H^*-t}}\,\frac{v(t)}{3v(t)^4-1}\, dt
$$
$$
=\Big [-\sqrt{6H^*-t}\,\frac{v(t)}{3v(t)^4-1}\Big ]_{-t_1}^{6H^*}-
\int_{-t_1}^{6H^*}\sqrt{6H^*-t}\,\frac{9v(t)^4+1}{(3v(t)^4-1)^2} v'(t)\, dt
$$
$$
=\sqrt{6H^*+t_1}\,\frac{v_1}{3v_1^4-1}
-\int_{v_1}^{u_M^*}\sqrt{6H^*-v^6+v^2}\,\frac{9v^4+1}{(3v^4-1)^2}\, dv\,.
$$
Hence,
\begin{multline*}
\frac{d}{dz}\int_{v_1}^{u_M^*}\frac{v^2}{\sqrt{6H^*-v^6+v^2}}\, dv\,
\\
=\frac{3(H^*)'(z)}{\sqrt{6H^*+t_1}}\,
\frac{v_1}{3v_1^4-1}-3(H^*)'(z)\,\int_{v_1}^{u_M^*}\frac{1}{\sqrt{6H^*-v^6+v^2}}\,\frac{9v^4+1}{(3v^4-1)^2}\, dv\,
\end{multline*}
Since both the coefficients multiplying the term $(H^*)'(z)$ are finite for $\phi^*(z)\to 0$  ($H^*\to 0$) the above derivative is
$\mathcal{O}\big (\phi^*(z)^2\big )$ for $z\to +\infty$.
By recalling the definition \eqref{dermu0}, we conclude that
$$
\lim_{z\to +\infty}\mu'(z)= 0^+.
$$

\section*{Acknowledgments}
Work partially supported
by the PRIN-2015KB9WPT Grant: ``Variational methods, with applications to problems in
mathematical physics and geometry'', and by the INdAM-GNAMPA group.



%

\medskip
\small
\begin{flushright}
\noindent \verb"dario.pierotti@polimi.it"\\
\noindent \verb"nicola.soave@polimi.it"\\
\noindent \verb"gianmaria.verzini@polimi.it"\\
Dipartimento di Matematica, Politecnico di Milano\\
piazza Leonardo da Vinci 32, 20133 Milano (Italy)
\end{flushright}

\end{document}